\documentclass{amsart}
\usepackage[mathscr]{eucal}
\usepackage{amssymb}
\usepackage{latexsym}
\usepackage{amsthm}

\theoremstyle{plain}
\newtheorem{Def}{Definition}[section]
\newtheorem{Thm}[Def]{Theorem}
\newtheorem{Prop}[Def]{Proposition}
\newtheorem{Rem}[Def]{Remark}
\newtheorem{Ex}[Def]{Example}
\newtheorem{Cor}[Def]{Corollary}

\newtheorem{Lem}[Def]{Lemma}

\numberwithin{equation}{section}

\title{
Note on mod $p$ property of Hermitian modular forms
}
\author{Toshiyuki Kikuta and Shoyu Nagaoka}

\begin{document}
\maketitle
\footnote{New title :
On the theta operator for Hermitian modular
forms of degree 2}
%
\noindent
%
\begin{abstract}
The mod $p$ kernel of the theta operator is the set of modular
forms whose image of the theta operator is congruent to zero
modulo a prime $p$. In the case of Siegel modular forms, the
authors found interesting examples of such modular forms.
For example, Igusa's odd weight cusp form is an element of 
 mod 23 kernel of the
theta operator. In this paper, we give some examples which represent
elements in the mod $p$ kernel of the theta operator in the case of
Hermitian modular forms of degree 2.
\end{abstract}

\section{Introduction}
\label{intro}
Serre \cite{Serre} developed the theory of $p$-adic and mod $p$ modular
forms and  produced several interesting results.
In his theory, the Ramanujan operator
$\theta :f=\sum a_nq^n \longmapsto \theta (f):=\sum n\,a_nq^n$
played an important role. The notion of such operator was extended
to the case of Siegel modular forms. The theta operator (generalized
Ramanujan operator) on Siegel modular forms is defined by
$$
\varTheta : F=\sum_T a(T)q^T \longmapsto 
\varTheta (F):=\sum_T\text{det}(T)\cdot a(T)q^T,
$$
where $F=\sum a(T)q^T$ is the Fourier expansion (generalized $q$-expansion)
of $F$.

For a prime number $p$, the theta operator acts on the algebra of mod $p$
Siegel modular forms (cf. B\"{o}cherer-Nagaoka \cite{B-N}). In our study, we found Siegel
modular forms $F$ which satisfy the property
$$
\varTheta (F) \equiv 0 \pmod{p}.
$$
The space consisting of such Siegel modular forms is called the {\it mod $p$
kernel of the theta operator}. In this terminology, we can say that the Igusa cusp form
of weight 35 is an element of the the mod 23 kernel of the 
theta operator (cf. Kikuta-Kodama-Nagaoka \cite{K-K-N}). Moreover the theta series attached to 
the Leech lattice is also in the mod 23 kernel of the theta operator 
(cf. Nagaoka-Takemori \cite{N-T}).

The main purpose of this paper is to extend the notion of the theta operator
to the case of Hermitian modular forms and, to give some examples of Hermitian 
modular forms which are in the mod $p$ kernel of the theta operator.
The first half concerns the Eisenstein series. Let 
$\Gamma^2(\mathcal{O}_{\boldsymbol{K}})$ be the Hermitian modular group
of degree 2 with respect to an imaginary quadratic number field $\boldsymbol{K}$.
Krieg \cite{Krieg} constructed a weight $k$ Hermitian modular form 
$F_{k,\boldsymbol{K}}$ which coincides with the weight $k$ Eisenstein series
$E_{k,\boldsymbol{K}}^{(2)}$ for $\Gamma^2(\mathcal{O}_{\boldsymbol{K}})$.

The first result says that the Hermitian modular form $F_{p+1,\boldsymbol{K}}$
is in the mod $p$ kernel of the theta operator under some condition
on $p$, namely
$$
\varTheta (F_{p+1,\boldsymbol{K}}) \equiv 0\pmod{p}
\quad (\text{cf. Theorem \ref{main1}}).
$$
As a corollary, we can show that the weight $p+1$ Hermitian Eisenstein
series $E_{p+1,\boldsymbol{K}}^{(2)}$ satisfies
$$
\varTheta( E_{p+1,\boldsymbol{K}}^{(2)}) \equiv 0 \pmod{p}
$$
if the class number of $\boldsymbol{K}$ equals one.

In the remaining part, we give various examples which are in the mod $p$
kernel of the theta operator. The first example we show is related to the
theta series attached to positive definite, even unimodular Hermitian lattice 
over the Gaussian field.
Let $\mathcal{L}$ be a positive definite, even unimodular Hermitian lattice of rank
$r$ with the Gram matrix $H$. We denote the corresponding Hermitian theta series
of degree $n$ by $\vartheta_{\mathcal{L}}^{(n)}=\vartheta_H^{(n)}$. It is known
that the rank $r$ is divisible by 4 and $\vartheta_{\mathcal{L}}^{(n)}$ becomes a
Hermitian modular form of weight $r$. 
In the case $r=12$, we have a positive definite, even integral Hermitian lattice
$\mathcal{L}_{\mathbb{C}}$ of rank 12, which does not have any vector of
length one. In this paper we call it the Hermitian Leech lattice. 
The theta series attached to $\mathcal{L}_{\mathbb{C}}$ satisfies
$$
\varTheta (\vartheta_{\mathcal{L}_{\mathbb{C}}}^{(2)}) \equiv
0 \pmod{11}
\quad (\text{cf. Theorem \ref{mod11cong}}).
$$

The next example is connected with the Hermitian theta constant. 
Let $\mathcal{E}$ be a set of mod 2 even characteristics of degree 2 (cf. $\S$ \ref{thetaconstant}).
We consider the theta constant $\theta_{\boldsymbol{m}}$\,$(\boldsymbol{m}\in\mathcal{E})$.
It is known that the function
$$
\psi_{4k}:=\frac{1}{4}\sum_{\boldsymbol{m}\in\mathcal{E}}\theta_{\boldsymbol{m}}^{4k}
$$
defines a Hermitian modular form of weight $4k$ (cf. Freitag \cite{Freitag}).
The final result can be stated as
$$
\varTheta (\psi_8) \equiv 0 \pmod{7},\qquad
\varTheta (\psi_{12}) \equiv 0 \pmod{11}
\quad (\text{cf. Theorem \ref{thetaconstantth}}).
$$

Our proof is based on the fact that the image of a weight $k$ modular form of the theta
operator is congruent to a weight $k+p+1$ cusp form mod $p$ (cf. Theorem 2.7) and then
we use the Sturm bound (Corollary 2.6).
\section{Hermitian modular forms}
\label{Sect.2}
\subsection{Notation and definition}
\label{Sect.2.1}
The Hermitian upper half-space of degree $n$ is defined by
$$
\mathbb{H}_n:=\{\,Z\in Mat_n(\mathbb{C})\,\mid\, \frac{1}{2i}(Z-{}^tZ)>0\,\},
$$
where ${}^t\overline{Z}$ is the transposed complex conjugate of $Z$.
The space $\mathbb{H}_n$ contains the Siegel upper-half space of degree
$n$
$$
\mathbb{S}_n:=\mathbb{H}_n\cap {\rm Sym}_n(\mathbb{C}).
$$
Let $\boldsymbol{K}$ be an imaginary quadratic number field with discriminant
$d_{\boldsymbol{K}}$ and ring of integers $\mathcal{O}_{\boldsymbol{K}}$.
The Hermitian modular group
$$
\Gamma^n(\mathcal{O}_{\boldsymbol{K}}):=\left\{\, M\in
{\rm Mat}_{2n}(\mathcal{O}_{\boldsymbol{K}})
\,\mid\,
{}^t\overline{M}J_nM=J_n:=
\begin{pmatrix}0 & -1_n \\ 1_n & 0\end{pmatrix}\,\right\}
$$
acts on $\mathbb{H}_n$ by fractional transformation
$$
\mathbb{H}_n\ni Z\longmapsto M\langle Z\rangle:=
(AZ+B)(CZ+D)^{-1},\;
M=\begin{pmatrix}A&B\\ C&D\end{pmatrix}
\in \Gamma^n(\mathcal{O}_{\boldsymbol{K}}).
$$

Let $\Gamma \subset \Gamma^n(\mathcal{O}_{\boldsymbol{K}})$ be a
subgroup of finite index and $\nu_k$\,$(k\in\mathbb{Z})$ an abelian character
of $\Gamma$ satisfying $\nu_k\cdot\nu_{k'=}\nu_{k+k'}$.
We denote by $M_k(\Gamma,\nu_k)$ the space of Hermitian modular forms
of weight $k$ and character $\nu_k$ with respect to $\Gamma$. Namely
it consists of holomorphic functions $F:\mathbb{H}_n\longrightarrow\mathbb{C}$
satisfying
$$
F\mid_kM(Z):={\rm det}(CZ+D)^{-k}F(M\langle Z\rangle)=\nu_k(M)\cdot F(Z)
$$
for all $M=\binom{*\,*}{CD}\in \Gamma$. When
$\nu_k$ is trivial, we write it by $M_k(\Gamma)$ simply. The subspace
$S_k(\Gamma,\nu_k)$ of cusp forms is characterized by the condition
$$
\Phi\left(F\mid_k\begin{pmatrix}{}^t\overline{U}&0\\ 0& U\end{pmatrix}\right)
\equiv 0,\quad
(U\in GL_n(\mathcal{O}_{\boldsymbol{K}})),
$$
where $\Phi$ is the Siegel operator. A modular form $F\in M_k(\Gamma,\nu_k)$
is called {\it symmetric} if
$$
F({}^t\!Z)=F(Z).
$$
We denote by $M_k(\Gamma,\nu_k)^{\rm sym}$ the subspace consisting of symmetric
modular forms. Moreover
$$
S_k(\Gamma,\nu)^{\rm sym}:=M_k(\Gamma,\nu_k)^{\rm sym}\cap
S_k(\Gamma,\nu_k).
$$
If $F\in M_k(\Gamma,\nu_k)$ satisfies the condition
$$
F(Z+B)=F(Z)\qquad {\rm for\; all} \;B\in Her_n(\mathcal{O}_{\boldsymbol{K}}),
$$
then $F$ has a Fourier expansion of the form
\begin{equation}
\label{Fourier}
F(Z)
=\sum_{0\leq H\in\Lambda_n(\boldsymbol{K})}
a(F;H)\text{exp}(2\pi i\text{tr}(HZ)),
\end{equation}
where
$$
\Lambda_n(\boldsymbol{K}):=\{\,H=(h_{jl})\in Her_n(\boldsymbol{K})\,\mid\,
h_{jj}\in\mathbb{Z},\;\sqrt{d_{\boldsymbol{K}}}\,h_{jl}\in\mathcal{O}_{\boldsymbol{K}}\,\}.
$$

We assume that any $F\in M_k(\Gamma,\nu_k)$ has the Fourier expansion above.
For any subring $R\subset\mathbb{C}$, we write as
$$
M_k(\Gamma,\nu_k)_R:=\{\,F\in M_k(\Gamma,\nu_k)\,\mid\,
a(F;H)\in R\;\;(\forall H\in\Lambda_n(\boldsymbol{K}))\,\}.
$$
In the Fourier expansion (\ref{Fourier}), we use the abbreviation
$$
\boldsymbol{q}^H:=\text{exp}(2\pi i\text{tr}(HZ)).
$$
The generalized $\boldsymbol{q}$-expansion $F=\sum a(F;H)\boldsymbol{q}^H$
can be considered as an element in a formal power series ring
$\mathbb{C}[\![\boldsymbol{q}]\!]$ (cf. Munemoto-Nagaoka \cite{M-N}, p.248) from which we have
$$
M_k(\Gamma,\nu_k)_R \subset R[\![\boldsymbol{q}]\!].
$$

Let $p$ be a prime number and $\mathbb{Z}_{(p)}$ the local ring at $p$, namely, 
ring of $p$-integral rational numbers. For 
$F_i\in M_{k_i}(\Gamma,\nu_{k_i})_{\mathbb{Z}_{(p)}}$\,
($i=1,2$), we write $F_1 \equiv F_2 \pmod{p}$ when
$$
a(F_1;H) \equiv a(F_2;H) \pmod{p}
$$
for all $H\in\Lambda_n(\boldsymbol{K})$.
\subsection{Hermitian modular forms of degree 2}
\label{Sect.2.2}
In the rest of this paper, we deal with Hermitian modular forms of degree 2.
\subsubsection{Eisenstein series}
\label{Sect.2.2.1}
We consider the Hermitian Eisenstein series of degree 2.
$$
E_{k,\boldsymbol{K}}^{(2)}(Z)
:=\sum_{M=\binom{*\,*}{CD}}\text{det}^{k/2}(M)\,\text{det}(CZ+D)^{-k},
\quad Z\in \mathbb{H}_2,
$$
where $k>4$ is even and $M=\binom{*\,*}{CD}$ runs over a set
of representaives of 
$\left\{\binom{*\,*}{0\,*}\right\}\backslash \Gamma^2(\mathcal{O}_{\boldsymbol{K}})$.
Then
$$
E_{k,\boldsymbol{K}}^{(2)}\in
M_k(\Gamma^2(\mathcal{O}_{\boldsymbol{K}}),\text{det}^{-k/2})_{\mathbb{Q}}^{\text{sym}}.
$$
Moreover, $E_{4,\boldsymbol{K}}^{(2)}\in
M_4(\Gamma^2(\mathcal{O}_{\boldsymbol{K}}),\text{det}^{-2})_{\mathbb{Q}}^{\text{sym}}$ is constructed by the Maass
lift (Krieg \cite{Krieg}). For an even integer $k\geq 4$, $E_k^{(1)}=\Phi (E_{k,\boldsymbol{K}}^{(2)})$
is the normalized Eisenstein series Eisenstein series of weight $k$ for $SL_2(\mathbb{Z})$.
\subsubsection{Structure of the graded ring in the case 
$\boldsymbol{K}=\mathbb{Q}(i)$}
\label{Sect.2.2.2}
In this section, we assume that $\boldsymbol{K}=\mathbb{Q}(i)$. In \cite{K-N},
the authors defined some Hermitian cusp forms
$$
\chi_8\in S_8(\Gamma^2(\mathcal{O}_{\boldsymbol{K}}))_{\mathbb{Z}}^{\text{sym}},
F_{10}\in S_{10}(\Gamma^2(\mathcal{O}_{\boldsymbol{K}}),\text{det}^5)_{\mathbb{Z}}^{\text{sym}},
F_{12}\in S_{12}(\Gamma^2(\mathcal{O}_{\boldsymbol{K}}))_{\mathbb{Z}}^{\text{sym}},
$$
characterized by
$$
\chi_8\mid_{\mathbb{S}_2}\equiv 0,\quad
F_{10}\mid_{\mathbb{S}_2}=6X_{10},\quad
F_{12}\mid_{\mathbb{S}_2}=X_{12},
$$
where $X_k$\,($k=10,12$) is Igusa's Siegel cusp form of weight $k$ 
with integral Fourier coefficients (cf. Kikuta-Nagaoka \cite{K-N}).
\begin{Thm}
\label{structure}
Let $\boldsymbol{K}=\mathbb{Q}(i)$. The graded ring
$$
\bigoplus_{k\in\mathbb{Z}}
M_k(\Gamma^2(\mathcal{O}_{\boldsymbol{K}}),\text{det}^{k/2})^{\text{sym}}.
$$
is generated by
$$
E_{4,\boldsymbol{K}}^{(2)},\quad
E_{6,\boldsymbol{K}}^{(2)},\quad
\chi_8,\quad
F_{10}, \;\;\text{and}\;\;
F_{12}.
$$
\end{Thm}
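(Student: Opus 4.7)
The plan is to induct on weight $k$, reducing to Igusa's classical structure theorem for symmetric Siegel modular forms of degree $2$ via restriction to the Siegel upper half-space $\mathbb{S}_2 \subset \mathbb{H}_2$. Recall Igusa's theorem: the even-weight subring $\bigoplus_k M_k(\mathrm{Sp}_4(\mathbb{Z}))^{\mathrm{sym}}$ equals the polynomial ring $\mathbb{C}[E_4, E_6, X_{10}, X_{12}]$.

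Let $A_{\ast}$ denote the subring generated by the five forms listed, and write $M_{\ast} := \bigoplus_k M_k(\Gamma^2(\mathcal{O}_{\boldsymbol{K}}), \det^{k/2})^{\mathrm{sym}}$; the inclusion $A_\ast \subset M_\ast$ is immediate. Consider the restriction map $\rho \colon M_\ast \to M_\ast^{\mathrm{Sp}_4(\mathbb{Z})}$ given by $F \mapsto F|_{\mathbb{S}_2}$. This is well-defined because $\mathrm{Sp}_4(\mathbb{Z}) \subset \Gamma^2(\mathcal{O}_{\boldsymbol{K}})$ stabilizes $\mathbb{S}_2$ and the character $\det^{k/2}$ becomes trivial on integral symplectic matrices. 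By the defining properties of the generators one has $\rho(E_{4,\boldsymbol{K}}^{(2)}) = E_4$, $\rho(E_{6,\boldsymbol{K}}^{(2)}) = E_6$, $\rho(F_{10}) = 6X_{10}$, $\rho(F_{12}) = X_{12}$, and $\rho(\chi_8) = 0$. Hence $\rho(A_\ast)$ already contains the entire even-weight symmetric Siegel ring.

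Given $F \in M_k$, Igusa's theorem produces a polynomial $P$ in the five generators such that $\rho(F) = \rho(P)$, and then $F - P \in \ker \rho$. The induction closes provided one can prove the

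\textbf{Key Lemma:} $\ker \rho \cap M_\ast = \chi_8 \cdot M_\ast$; that is, every symmetric Hermitian modular form (with the given character) vanishing on $\mathbb{S}_2$ is $\chi_8$ times a form of weight shifted by $-8$.

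Assuming the Key Lemma, one writes $F - P = \chi_8 \cdot G$ for some $G \in M_{k-8}$, applies the inductive hypothesis to $G$, and concludes $F \in A_k$. The finitely many base cases (small $k$, up to roughly weight $8$) are handled by comparing $\dim A_k$ and $\dim M_k$ directly via Fourier coefficients and a Sturm-type bound.

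The main obstacle is the Key Lemma. Geometrically $\mathbb{S}_2$ is the smooth divisor in $\mathbb{H}_2$ cut out by the antisymmetric part $A$ of $Z$, and any symmetric $F$ satisfies $F(S + A) = F(S - A)$, so $F$ is even in $A$ and must vanish to even order along $\mathbb{S}_2$ whenever it vanishes there at all. The strategy is to verify from the explicit construction in \cite{K-N} that $\chi_8$ vanishes to order exactly $2$ along (every $\Gamma^2(\mathcal{O}_{\boldsymbol{K}})$-translate of) $\mathbb{S}_2$; then for any $H \in \ker \rho \cap M_\ast^{\mathrm{sym}}$, the quotient $H/\chi_8$ is locally holomorphic on $\mathbb{H}_2$, inherits the correct modular transformation and symmetry since $\chi_8$ carries the same character and is symmetric, and is holomorphic at the cusps by the Koecher principle applied to its $\boldsymbol{q}$-expansion. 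This division step is the technical heart of the argument.
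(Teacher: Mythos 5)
The paper itself offers no proof of this theorem --- it is quoted from the literature (Kikuta--Nagaoka \cite{K-N}, going back to Freitag \cite{Freitag}) --- and your plan reconstructs precisely that standard argument: restrict to $\mathbb{S}_2$, invoke Igusa's structure theorem, and divide the kernel of restriction by $\chi_8$. The reduction of everything to your Key Lemma is sound; note also that the base cases $k\le 6$ already follow from the Key Lemma (it forces $\rho$ to be injective below weight $8$, so $M_k$ embeds into Igusa's ring there), and a Sturm-type bound --- which in this paper is a mod $p$ statement anyway --- is not the right tool for that step.

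The one genuine gap is inside your proof of the Key Lemma. Knowing that a symmetric $H$ with $H|_{\mathbb{S}_2}=0$ vanishes to order $\ge 2$ along every $\Gamma^2(\mathcal{O}_{\boldsymbol{K}})$-translate of $\mathbb{S}_2$, and that $\chi_8$ vanishes to order exactly $2$ there, does \emph{not} yet make $H/\chi_8$ locally holomorphic: you must also rule out components of the zero divisor of $\chi_8$ lying \emph{outside} the $\Gamma^2(\mathcal{O}_{\boldsymbol{K}})$-orbit of $\mathbb{S}_2$, since $H$ has no reason to vanish on those and the quotient would acquire poles there. What the division argument actually requires is the divisor identity $\operatorname{div}(\chi_8)=2\sum_{M}\bigl[M\langle\mathbb{S}_2\rangle\bigr]$, i.e.\ that the orbit of $\mathbb{S}_2$ exhausts the zero set of $\chi_8$ --- this is the nontrivial input supplied by the explicit construction of $\chi_8$ in \cite{Freitag} and \cite{K-N}, and your sketch records only the vanishing order along $\mathbb{S}_2$ while silently assuming there are no other zeros. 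Once that identity is in hand, the remaining steps (symmetry and character of $H/\chi_8$, holomorphy at the cusps via the Koecher principle, induction on the weight) go through as you describe.
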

For the proof, we should consult, for example, Kikuta-Nagaoka \cite{K-N}.
\begin{Rem}
$E_{4,\boldsymbol{K}}^{(2)}\in M_4(\Gamma^2(\mathcal{O}_{\boldsymbol{K}}))_{\mathbb{Z}}^{\text{sym}} $,
$E_{6,\boldsymbol{K}}^{(2)}\in M_6(\Gamma^2(\mathcal{O}_{\boldsymbol{K}}),\text{det}^{3})_{\mathbb{Z}}^{\text{sym}} $.
\end{Rem}
\subsubsection{Sturm bound}
\label{Sect.2.2.3}
Sturm gave some condition of $p$-divisibility of the Fourier coefficients of modular form. 
Later the bound is studied in the case of modular forms with several variables.
For example, the first author \cite{C-C-K} studied the bound for Hermitian modular
forms of degree 2 with respect to 
$\boldsymbol{K}=\mathbb{Q}(i)$ and $\mathbb{Q}(\sqrt{3}\,i)$. However the statement
is incorrect. We correct it here.

We assume that $\boldsymbol{K}=\mathbb{Q}(i)$ and use an abbreviation
\begin{equation}
\label{abb}
[m,a+bi,n]:=\begin{pmatrix}m & \frac{a+bi}{2}\\ \frac{a-bi}{2} & n \end{pmatrix}
\in\Lambda_2(\boldsymbol{K}).
\end{equation}
We define a lexicographic order for the different element elements
$$
H=[m,a+bi,n],\quad H'=[m',a'+b'i,n']
$$
of $\Lambda_2(\boldsymbol{K})$ by
\begin{align*}
H \succ H'\quad\Longleftrightarrow\quad & (1)\; \text{tr}(H) > \text{tr}(H')\quad \text{or}\\
                                      & (2)\; \text{tr}(H)=\text{tr}(H'),\;m>m'\quad \text{or}\\
                                     & (3)\;  \text{tr}(H)=\text{tr}(H'),\;m=m',\;a>a'\quad \text{or}\\
                               & (4)\; \text{tr}(H)=\text{tr}(H'),\;m=m',\;a=a',\;b>b'.
\end{align*}
Let $p$ be a prime number and 
$F\in M_k(\Gamma^2(\mathcal{O}_{\boldsymbol{K}}))_{\mathbb{Z}_{(p)}}$.
We define the order of $F$ by
$$
\text{ord}_p(F):=\text{min}\{\,H\in\Lambda_2(\boldsymbol{K})\,\mid\,
a(F;H)\not\equiv 0 \pmod{p}\,\},
$$
where the ``minimum'' is defined in the sense of the above order. If
$F \equiv 0 \pmod{p}$, then we define $\text{ord}_p(F)=(\infty)$. In the case of
$\boldsymbol{K}=\mathbb{Q}(\sqrt{3}\,i)$, we can also define the order in a similar
way.

It is easy to check that
\begin{Lem}
\label{order}
The following equality holds.
$$
{\rm ord}_p(FG)={\rm ord}_p(F)+{\rm ord}_p(G).
$$
\end{Lem}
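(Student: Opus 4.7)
The plan is to expand a Fourier coefficient of $FG$ as a convolution
$$
a(FG;H)=\sum_{\substack{H_1,H_2\in\Lambda_2(\boldsymbol{K}),\,H_i\geq 0\\ H_1+H_2=H}}a(F;H_1)\,a(G;H_2),
$$
which is a finite sum for each $H$ (bounded by the trace), and then use the definition of $\mathrm{ord}_p$. Setting $H_F:=\mathrm{ord}_p(F)$ and $H_G:=\mathrm{ord}_p(G)$, I will show that (i) $a(FG;H_F+H_G)\not\equiv 0\pmod p$, and (ii) $a(FG;H)\equiv 0\pmod p$ for every $H\prec H_F+H_G$. The case in which $F\equiv 0$ or $G\equiv 0\pmod p$ is trivial, so assume both orders are finite.

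The one non-formal ingredient is the compatibility of the lexicographic order $\succ$ with matrix addition, which I would isolate as an auxiliary observation: if $H_1\succeq H_F$ and $H_2\succeq H_G$ then $H_1+H_2\succeq H_F+H_G$, with equality (as elements of $\Lambda_2(\boldsymbol{K})$) if and only if $H_1=H_F$ and $H_2=H_G$. This follows because the order is the lex order on the quadruple $(\mathrm{tr}(H),m,a,b)$ and each of these entries is additive on $\Lambda_2(\boldsymbol{K})$: if the inequality were strict in the leading coordinate for one of the pairs, additivity forces strict inequality in that coordinate for the sum; if equality holds in the leading coordinate for both pairs, one passes to the next coordinate and argues identically. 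Matching of all four coordinates of the sum then forces matching of all four coordinates of each summand.

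With this in hand the two claims are immediate. For (i), any pair $(H_1,H_2)$ contributing a nonzero mod $p$ term in the convolution for $H=H_F+H_G$ must satisfy $H_1\succeq H_F$ and $H_2\succeq H_G$ by the minimality defining $\mathrm{ord}_p$; the compatibility lemma forces $H_1=H_F$ and $H_2=H_G$, leaving the single term $a(F;H_F)a(G;H_G)$, which is a product of units mod $p$. For (ii), if some pair contributed nonzero mod $p$ to $a(FG;H)$ with $H\prec H_F+H_G$, the same argument would give $H=H_1+H_2\succeq H_F+H_G$, a contradiction; hence every term vanishes mod $p$. Combining (i) and (ii) yields $\mathrm{ord}_p(FG)=H_F+H_G=\mathrm{ord}_p(F)+\mathrm{ord}_p(G)$. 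The main obstacle, such as it is, is simply doing the case analysis for the auxiliary order-compatibility statement cleanly; everything else is a direct unpacking of definitions.
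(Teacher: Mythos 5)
Your proof is correct: the paper itself offers no proof of Lemma \ref{order} (it is prefaced only by ``It is easy to check that''), and your convolution argument together with the observation that $\succ$ is the lexicographic order on the additive quadruple $(\mathrm{tr}(H),m,a,b)$ --- hence a translation-compatible total order, so that $H_1\succeq H_F$, $H_2\succeq H_G$ and $H_1+H_2=H_F+H_G$ force $H_1=H_F$, $H_2=H_G$ --- is exactly the standard verification the authors intend, with the single surviving term $a(F;H_F)a(G;H_G)$ nonzero mod $p$ because $\mathbb{Z}/p\mathbb{Z}$ is a field. Nothing is missing.
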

Then we have
\begin{Thm}
\label{sturm}
Let $k$ be an even integer and $p$ a prime number with $p\geq 5$.
Let $\boldsymbol{K}=\mathbb{Q}(i)$ or $\boldsymbol{K}=\mathbb{Q}(\sqrt{3}\,i)$.
For $F\in M_k(\Gamma^2(\mathcal{O}_{\boldsymbol{K}}),\nu_k)_{\mathbb{Z}_{(p)}}^{\text{sym}}$,
assume that
$$
{\rm ord}_p(F)\succ
\begin{cases}
\displaystyle
\left[ \left[\frac{k}{8}\right],2 \left[\frac{k}{8}\right], \left[\frac{k}{8}\right]\right] & 
\text{if $\boldsymbol{K}=\mathbb{Q}(i)$}
\vspace{2mm}
\\
\displaystyle
\left[\left[\frac{k}{9}\right],2\left[\frac{k}{9}\right], \left[\frac{k}{9}\right]\right] & 
\text{if $\boldsymbol{K}=\mathbb{Q}(\sqrt{3}\,i)$}
\end{cases},
$$
Then we have ${\rm ord}_p(F)=(\infty)$, i.e., $F \equiv 0\pmod{p}$.
Here
$$
\nu_k=
\begin{cases}
{\rm det}^{k/2} & \text{if $\boldsymbol{K}=\mathbb{Q}(i)$}\\
{\rm det}^{k} & \text{if $\boldsymbol{K}=\mathbb{Q}(\sqrt{3}\,i)$}
\end{cases}
$$
and $[x]$ inside of the bracket means the greatest integer such that $\leq x$.
\end{Thm}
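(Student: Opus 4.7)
The plan is to induct on the weight $k$, combining the classical Sturm-type bound for Siegel modular forms of degree $2$ with the structure theorem (Theorem \ref{structure}) and the characterizing property $\chi_8|_{\mathbb{S}_2}\equiv 0$. For concreteness I describe the argument when $\boldsymbol{K}=\mathbb{Q}(i)$; the case $\boldsymbol{K}=\mathbb{Q}(\sqrt{3}\,i)$ would go through in parallel, with a cusp form of weight $9$ (satisfying the analogous vanishing on the Siegel diagonal) playing the role of $\chi_8$ and accounting for the bound $[k/9]$.

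The first step is to restrict to $\mathbb{S}_2$. Since $Sp_4(\mathbb{Z})\subset\Gamma^2(\mathcal{O}_{\boldsymbol{K}})$ and the character $\mathrm{det}^{k/2}$ is trivial on this subgroup, $F|_{\mathbb{S}_2}$ is a Siegel modular form of weight $k$ on $Sp_4(\mathbb{Z})$ with $\mathbb{Z}_{(p)}$-integral Fourier coefficients; its coefficients are exactly the $a(F;[m,a,n])$ with $b=0$. The hypothesis on $\mathrm{ord}_p(F)$ forces these to vanish modulo $p$ for every index of trace at most $2[k/8]$. Since $2[k/8]$ comfortably exceeds the known Sturm bound of order $k/5$ for Siegel modular forms of degree $2$ on $Sp_4(\mathbb{Z})$, one concludes $F|_{\mathbb{S}_2}\equiv 0\pmod{p}$.

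The second step is a mod $p$ factorization. Theorem \ref{structure} together with $\chi_8|_{\mathbb{S}_2}\equiv 0$ is used to produce $G\in M_{k-8}(\Gamma^2(\mathcal{O}_{\boldsymbol{K}}),\mathrm{det}^{(k-8)/2})_{\mathbb{Z}_{(p)}}^{\mathrm{sym}}$ with $F\equiv \chi_8\, G\pmod{p}$. Lemma \ref{order} then yields $\mathrm{ord}_p(G)=\mathrm{ord}_p(F)-\mathrm{ord}_p(\chi_8)$, and because $\mathrm{ord}_p(\chi_8)$ sits at an index with diagonal entries equal to $1$, a direct comparison (using $[k/8]-1\geq [(k-8)/8]$) gives $\mathrm{ord}_p(G)\succ [[(k-8)/8],\,2[(k-8)/8],\,[(k-8)/8]]$. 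The inductive hypothesis applied at weight $k-8$ yields $G\equiv 0\pmod{p}$, hence $F\equiv 0\pmod{p}$. Small weights ($k<8$) serve as the base case and are handled directly, the relevant spaces being either zero or spanned by forms whose orders are easy to locate.

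The main obstacle is the mod $p$ factorization. Theorem \ref{structure} is stated over $\mathbb{Q}$ (or $\mathbb{C}$), whereas the induction requires a $\mathbb{Z}_{(p)}$-integral version: namely that the kernel of restriction to $\mathbb{S}_2$ on the $\mathbb{Z}_{(p)}$-integral symmetric part is exactly the principal ideal generated by $\chi_8$. This amounts to verifying that $\chi_8$ (in the chosen normalization) has a Fourier coefficient that is a $p$-unit, and that the denominators appearing when a general symmetric form is written as a polynomial in the five generators are coprime to $p$. This $p$-integrality bookkeeping is the delicate point, and it is also where the hypothesis $p\geq 5$ naturally enters.
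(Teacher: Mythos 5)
Your proposal follows essentially the same route as the paper's proof: induction on the weight, restriction to $\mathbb{S}_2$ combined with the degree-2 Siegel Sturm bound to kill the polynomial part $P(E_{4,\boldsymbol{K}}^{(2)},E_{6,\boldsymbol{K}}^{(2)},F_{10},F_{12})$ modulo $p$, then factoring out $\chi_8$ and applying Lemma \ref{order} to run the induction at weight $k-8$. The $\mathbb{Z}_{(p)}$-integrality of the decomposition $F=P(\cdots)+\chi_8 G$, which you rightly flag as the delicate point, is exactly what the paper asserts (without further justification) when it takes $P\in\mathbb{Z}_{(p)}[x_1,x_2,x_3,x_4]$ and $G\in M_{k-8}(\Gamma^2(\mathcal{O}_{\boldsymbol{K}}),\nu_{k-8})_{\mathbb{Z}_{(p)}}^{\mathrm{sym}}$.
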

\begin{Rem}
For 
$\boldsymbol{K}=\mathbb{Q}(i)$ (resp. $\boldsymbol{K}=\mathbb{Q}(\sqrt{3}\,i)$)
the matrix $\left[ [\frac{k}{8}],2[\frac{k}{8}],[\frac{k}{8}] \right]$\;
$\left(\text{resp.} \left[ [\frac{k}{9}],2[\frac{k}{9}],[\frac{k}{9}] \right]\right)$
is the maximum of the elements in $\Lambda_2(\boldsymbol{K})$ of the form
$\left[ [\frac{k}{8}],*,[\frac{k}{8}] \right]\geq 0$\;
(resp. $\left[ [\frac{k}{9}],*,[\frac{k}{9}] \right]\geq 0$).
\end{Rem}
\begin{proof} {\it of Theorem \ref{sturm}}
Let $\boldsymbol{K}=\mathbb{Q}(i)$. We use the induction
on the weight $k$. 
We can confirm that it is true for small $k$.
Suppose that the statement is true for any $k$ with $k<k_0$.
We shall prove that the statement is true for the weight $k_0$.

Let $\text{ord}_p(F)=[m_0,\alpha_0,n_0]\succ
\left[ [\frac{k}{8}],2 [\frac{k}{8}], [\frac{k}{8}]\right]$.
Applying Theorem \ref{structure} to 
$F$, we can write as
\begin{equation}
\label{expression}
F=P(E_{4,\boldsymbol{K}}^{(2)},E_{6,\boldsymbol{K}}^{(2)},F_{10},F_{12})+\chi_8\cdot G,
\end{equation}
where
$$
P\in\mathbb{Z}_{(p)}[x_1,x_2,x_3,x_4]\;\; \text{and}\;\;
G\in M_{k-8}(\Gamma^2(\mathcal{O}_{\boldsymbol{K}}),\nu_{k-8})_{\mathbb{Z}_{(p)}}^{\text{sym}}.
$$
Now we recall that
\begin{equation}
\label{restriction}
a(F\mid_{\mathbb{S}_2};[m,r,n])=
\sum_{\substack{a+bi\in\mathbb{Z}[i]\\ r=2a\\ 4mn-(a^2+b^2)\geq 0}}
a(F;[m,a+bi,n]).
\end{equation}
Restricting both sides of (\ref{expression}) to $\mathbb{S}_2$, we obtain
$$
F\mid_{\mathbb{S}_2}=P(G_4,G_6,6X_{10},X_{12}),
$$
where $G_k$ is the Siegel Eisenstein series of weight $k$ and $X_k$ is Igusa's
cusp form appeared in $\S$ \ref{Sect.2.2.2}. The identity (\ref{restriction}) implies
that
$$
\text{ord}_p(F\mid_{\mathbb{S}_2})\succ
\left[\left[\frac{k}{8}\right],2\left[\frac{k}{8}\right], \left[\frac{k}{8}\right]\right].
$$
In particular, we have
$$
\text{ord}_p(F\mid_{\mathbb{S}_2})\succ
\left[\left[\frac{k}{10}\right],r, \left[\frac{k}{10}\right]\right]
$$
for any $r\in\mathbb{Z}$. By Theorem 2.4 in Kikuta-Kodama-Nagaoka \cite{K-K-N}, we have
$F \equiv 0 \pmod{p}$. Therefore $P \equiv 0 \pmod{p}$ as a polynomial and
hence
$$
F \equiv \chi_8\cdot G \pmod{p}. 
$$
By Lemma \ref{order}, $\text{ord}_p(G)=[m_0-1,\alpha_0-(1+i),n_0-1]$ because
of $\text{ord}_p(\chi_8)=[1,1+i,1]$. It follows that
\begin{align*}
\text{ord}_p(G) & =[m_0-1,\alpha_0-(1+i),n_0-1]\\
                   & \succ \left[\left[\frac{k_0}{8}\right]-1,2\left[\frac{k_0}{8}\right]-(1+i), \left[\frac{k_0}{8}\right]-1\right]\\
                  & \succ \left[\left[\frac{k_0-8}{8}\right],2\left[\frac{k_0-8}{8}\right],
\left[\frac{k_0-8}{8}\right]\right].
\end{align*}
By the induction hypothesis, we have $G \equiv 0\pmod{p}$. This completes the proof
in the case $\boldsymbol{K}=\mathbb{Q}(i)$. \\
The proof in the case of
$\boldsymbol{K}=\mathbb{Q}(\sqrt{3}\,i)$ is almost the same as the case $\boldsymbol{K}=\mathbb{Q}(i)$.
\end{proof}
\begin{Cor}
\label{sturmcorollary}
Assume that $\boldsymbol{K}=\mathbb{Q}(i)$ (resp. $\mathbb{Q}(\sqrt{3}\,i)$) and
$p$ is a prime number with $p\geq 5$. If a Hermitian modular form
$F\in M_k(\Gamma^2(\mathcal{O}_{\boldsymbol{K}}),\nu_k)_{\mathbb{Z}_{(p)}}^{{\rm sym}}$
satisfies
$$
a(F;H) \equiv 0 \pmod{p}
$$
for all $H\in\Lambda_2(\boldsymbol{K})$ with 
${\rm tr}(H)\leq 2 \displaystyle \left[\frac{k}{8}\right]$ {\rm (}resp.
${\rm tr}(H)\leq 2 \displaystyle \left[\frac{k}{9}\right]${\rm )}, then
$$
F \equiv 0 \pmod{p}.
$$
\end{Cor}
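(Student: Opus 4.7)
The plan is to deduce Corollary \ref{sturmcorollary} as an immediate repackaging of Theorem \ref{sturm}. The essential point is that clause (1) of the lexicographic order defined just above Lemma \ref{order} gives absolute priority to the trace: one has $H\succ H'$ whenever $\text{tr}(H)>\text{tr}(H')$, regardless of the off-diagonal entries. So the ``trace-strip'' hypothesis of the corollary will turn out to be at least as strong as the order-theoretic hypothesis of Theorem \ref{sturm}.

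Concretely, set $H_0:=[[k/8],2[k/8],[k/8]]$ in the case $\boldsymbol{K}=\mathbb{Q}(i)$, and the analogous matrix with $[k/9]$ in the case $\boldsymbol{K}=\mathbb{Q}(\sqrt{3}\,i)$. This is precisely the reference matrix appearing in Theorem \ref{sturm}, and a direct computation using the abbreviation (\ref{abb}) gives $\text{tr}(H_0)=2[k/8]$ (resp.\ $2[k/9]$); moreover the determinant of $H_0$ vanishes, so $H_0\in\Lambda_2(\boldsymbol{K})_{\geq 0}$. Under the hypothesis of the corollary, $a(F;H)\equiv 0\pmod{p}$ for every $H\in\Lambda_2(\boldsymbol{K})$ with $\text{tr}(H)\leq\text{tr}(H_0)$. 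Hence any $H$ for which $a(F;H)\not\equiv 0\pmod{p}$ must satisfy $\text{tr}(H)>\text{tr}(H_0)$, and clause (1) of the lex order immediately forces $H\succ H_0$.

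By the definition of $\text{ord}_p(F)$, this means either $\text{ord}_p(F)=(\infty)$ (in which case we are already done) or $\text{ord}_p(F)\succ H_0$; in the latter case Theorem \ref{sturm} applies and yields $F\equiv 0\pmod{p}$. The argument is identical in the $\mathbb{Q}(i)$ and $\mathbb{Q}(\sqrt{3}\,i)$ cases.

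The main obstacle — if one can even call it that — is merely the bookkeeping: recognizing that $H_0$ has trace exactly $2[k/8]$ (resp.\ $2[k/9]$) and that the lex order respects traces, so that controlling Fourier coefficients on a trace slab is a stronger condition than being strictly dominated in the lex order by $H_0$. Once this is noticed, the corollary is a one-line consequence of Theorem \ref{sturm}; its role is simply to expose a clean, trace-based divisibility criterion that is easier to apply in the subsequent examples of elements in the mod $p$ kernel of the theta operator.
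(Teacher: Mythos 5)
Your argument is correct and is precisely the intended derivation: the paper states Corollary \ref{sturmcorollary} without proof as an immediate consequence of Theorem \ref{sturm}, relying on exactly the observation you make, namely that clause (1) of the lexicographic order gives the trace absolute priority, so vanishing of all coefficients on the slab $\mathrm{tr}(H)\leq 2[k/8]$ (resp.\ $2[k/9]$) forces $\mathrm{ord}_p(F)\succ\left[[k/8],2[k/8],[k/8]\right]$ (resp.\ the $[k/9]$ analogue) or $\mathrm{ord}_p(F)=(\infty)$. Nothing is missing.
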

\subsection{Theta operator}
\label{Sect.2.3}
We recall that the Fourier expansion of Hermitian modular form
can be regarded as an element of certain formal power series ring
$\mathbb{C}[\![\boldsymbol{q}]\!]$. The {\it theta operator}
over $\mathbb{C}[\![\boldsymbol{q}]\!]$ is defined as
$$
\varTheta : F=\sum a(F;H)\boldsymbol{q}^H \longmapsto
\varTheta (F):=\sum a(F;H)\cdot \text{det}(H)\boldsymbol{q}^H.
$$
In the case that $n=1$, the theta operator is equal to the Ramanujan
operator, which produces several interesting results (cf. Serre \cite{Serre}).
It should be noted that $\varTheta (F)$ is not necessarily a Hermitian
modular form even if $F$ is.

We fix a prime number $p$. If $F\in M_k(\Gamma,\nu_k)_{\mathbb{Z}_{(p)}}$
satisfies
$$
\varTheta (F) \equiv 0 \pmod{p},
$$
then we call $F$ an element of {\it the mod $p$ kernel of the theta operator}.
Assume that $F=\sum a(F;H)\boldsymbol{q}^H\in M_k(\Gamma,\nu_k)_{\mathbb{Z}_{(p)}}$.
If there is an integer $r$\,($r<n$) such that
$$
a(F;H) \equiv 0 \pmod{p}
$$
holds fro all $H\in\Lambda_n(\boldsymbol{K})$ with $\text{rank}(H)>r$, then
$F$ is called a {\it mod $p$ singular Hermitian modular form} (e.g. cf. B\"{o}cherer-Kikuta \cite{B-K}). 
It is obvious that,
if $F$ is a mod $p$ singular Hermitian modular form, then $F$ is an element of
mod $p$ kernel of the theta operator.

The main purpose of this paper is to give some examples of Hermitian modular
form in the mod $p$ kernel of the theta operator in the case that $n=2$.
\subsubsection{Basic property of theta operator}
\label{Sect.2.3.1}
As we stated above, the image $\varTheta (F)$ is not necessarily a Hermitian
modular form. However the following result holds:
\begin{Thm}
\label{thetamodp}
Assume that $\boldsymbol{K}=\mathbb{Q}(i)$ and $p$ is a prime number such
that $p\geq 5$. For any 
$F\in M_k(\Gamma^2(\mathcal{O}_{\boldsymbol{K}}),{\rm det}^{k/2})_{\mathbb{Z}_{(p)}}^{\text{sym}}$, there is a cusp form
$$
G\in S_{k+p+1}(\Gamma^2(\mathcal{O}_{\boldsymbol{K}}),{\rm det}^{(k+p+1)/2})_{\mathbb{Z}_{(p)}}^{{\rm sym}}
$$ 
such that
$$
\varTheta (F) \equiv G \pmod{p}.
$$
\end{Thm}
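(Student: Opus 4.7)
The plan is to realize $\varTheta(F) \pmod{p}$ as the $\boldsymbol{q}$-expansion of a holomorphic Hermitian modular form of weight $k+p+1$, by following the Hermitian analog of Serre's classical mod $p$ argument for elliptic forms as adapted to the Siegel setting by B\"ocherer-Nagaoka. Two auxiliary modular forms will do the heavy lifting: a ``Hasse invariant'' form of weight $p-1$ congruent to $1$ mod $p$, and a weight-$(p+1)$ companion used to absorb non-holomorphic corrections.

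First, I would produce a form $A\in M_{p-1}(\Gamma^2(\mathcal{O}_{\boldsymbol{K}}),\det^{(p-1)/2})^{\text{sym}}_{\mathbb{Z}_{(p)}}$ with $A\equiv 1\pmod{p}$, obtained as a suitable normalization of the Hermitian Eisenstein series $E_{p-1,\boldsymbol{K}}^{(2)}$. For $p\geq 5$, Krieg's explicit Fourier coefficient formula together with Clausen--von Staudt-type congruences for the Bernoulli-like quantities appearing in the coefficients give the desired congruence. Multiplication by $A$ then raises weight by $p-1$ without altering $\boldsymbol{q}$-expansions modulo $p$.

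Second, I would realize $\varTheta$ as the leading piece of a differential operator. On Fourier expansions, $\varTheta = (2\pi i)^{-2}\det(\partial/\partial Z)$, and this holomorphic second-order operator is the top-weight part of a Shimura-type operator $\mathcal{D}_k$ sending weight-$k$ Hermitian modular forms to nearly-holomorphic weight-$(k+2)$ forms of character $\det^{(k+2)/2}$. The non-holomorphic correction is a polynomial in $k$ with $p$-integral coefficients (for $p\geq 5$) involving the inverse of the imaginary part of $Z$. The product $A\cdot \mathcal{D}_k(F)$ then transforms as weight $k+p+1$, and its holomorphic leading part is congruent to $\varTheta(F)$ mod $p$.

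The main obstacle is disposing of the non-holomorphic part of $A\cdot \mathcal{D}_k(F)$ modulo $p$. The strategy, mirroring the elliptic congruence $E_2\equiv E_{p+1}\pmod{p}$, is to construct a genuine holomorphic Hermitian modular form $B$ of weight $p+1$ whose $\boldsymbol{q}$-expansion coincides mod $p$ with the product of $A$ and the Maass--Shimura correction term; then
$$
G := A\cdot\bigl(\text{holomorphic part of }\mathcal{D}_k(F)\bigr) - c(k)\, F\cdot B
$$
is an honest holomorphic Hermitian modular form of weight $k+p+1$ with character $\det^{(k+p+1)/2}$ satisfying $G\equiv \varTheta(F)\pmod{p}$. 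Existence of such a $B$ is the delicate $p$-adic input. Finally, cuspidality of $G$ modulo $p$ is automatic because $\det(H)=0$ for every $H\in\Lambda_2(\boldsymbol{K})$ of rank $\leq 1$, so all boundary Fourier coefficients of $\varTheta(F)$ vanish identically; hence $\Phi(\varTheta(F))=0$, and subtracting off the Eisenstein part of $G$ (which is $p$-divisible) yields a genuine cusp form representative.
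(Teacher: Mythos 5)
Your overall strategy (a weight $p-1$ form congruent to $1$ modulo $p$, combined with a differential operator whose leading term is $\varTheta$) belongs to the same family of ideas as the paper's, but it diverges at the crucial step and the divergence opens a genuine gap. The paper simply sets $G:=[F,G_{p-1}]$, the normalized Rankin--Cohen bracket of $F$ with a form $G_{p-1}\equiv 1\pmod{p}$ of weight $p-1$ (quoted from Kikuta--Nagaoka); this bracket is by construction a holomorphic \emph{cusp} form of weight $k+p+1$ with character $\det^{(k+p+1)/2}$, and modulo $p$ every term of the bracket containing a derivative of $G_{p-1}$ vanishes, leaving a unit multiple of $\varTheta(F)$. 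Your route instead tries to repair the nearly-holomorphic form $\mathcal{D}_k(F)$ by a single correction $c(k)\,F\cdot B$ with $B$ holomorphic of weight $p+1$. In degree $2$ this cannot work as stated: when one expands the determinant of the Maass--Shimura matrix operator, the non-holomorphic error of $\det(\partial/\partial Z)$ acting on $F$ is \emph{not} a multiple of $F$ by a fixed nearly-holomorphic scalar --- it contains terms in which first-order derivatives of $F$ are contracted against entries of $(\mathrm{Im}\,Z)^{-1}$. A correction of the shape $F\cdot B$ absorbs only the zeroth-order part, so the degree-one identity ``$\theta f$ equals the holomorphic part of $\partial_k f$ corrected via $E_2\mapsto E_{p+1}$'' does not close up in degree $2$. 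This is precisely what the Rankin--Cohen bracket is designed to handle: its mixed terms $\partial F\cdot\partial G_{p-1}$ supply the missing corrections, and they conveniently die modulo $p$ because $\partial G_{p-1}\equiv 0\pmod{p}$.

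Two smaller points. The existence of the weight $p-1$ form congruent to $1$ is indeed available for $p\geq 5$ (the paper cites Kikuta--Nagaoka, Proposition 5), so that ingredient is fine. But your cuspidality argument is also incomplete: from $\Phi(\varTheta(F))=0$ you only conclude $\Phi(G)\equiv 0\pmod{p}$, and to replace $G$ by a genuine cusp form congruent to it you would need a $p$-integral splitting of the Siegel operator (for instance a Klingen lift with controlled denominators), which you do not supply. The bracket construction sidesteps this entirely because $[F,G_{p-1}]$ is a cusp form on the nose.
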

\begin{proof}
A corresponding statement in the case of Siegel modular forms can be found
in B\"{o}cherer-Nagaoka \cite{B-N}, Theorem 4. The proof here follows the same line.
We consider the normailized Rankin-Cohen bracket $[F_1,F_2]$ for
$F_i\in M_{k_i}(\Gamma^2(\mathcal{O}_{\boldsymbol{K}}),\text{det}^{k_i/2})_{\mathbb{Z}_{(p)}}^{\text{sym}}$\\
$(i=1,2)$
(e.g. cf. Martin-Senadheera \cite{M-S}). We can show that $[F_1,F_2]$ becomes
a Hermitian cusp form
$$
[F_1,F_2]\in S_{k_1+k_2+2}
(\Gamma^2(\mathcal{O}_{\boldsymbol{K}}),\text{det}^{(k_1+k_2+2)/2})
_{\mathbb{Z}_{(p)}}^{\text{sym}}.
$$
If $p$ is a prime number such that $p\geq 5$, then there is a Hermitian
modular form $G_{p-1}\in M_{p-1}(\Gamma^2(\mathcal{O}_{\boldsymbol{K}}),\text{det}^{(p-1)/2})_{\mathbb{Z}_{(p)}}^{\text{sym}}$ 
such that
$$
G_{p-1} \equiv 1 \pmod{p}\quad (\text{cf. {\rm Kikuta-Nagaoka} \cite{K-N}. Proposition 5}).
$$
For a given $F\in M_k(\Gamma^2(\mathcal{O}_{\boldsymbol{K}}),\text{det}^{k/2})_{\mathbb{Z}_{(p)}}^{\text{sym}}$, we have
$$
\varTheta (F) \equiv [F,G_{p-1}] \pmod{p}.
$$
Hence we may put
$$
G:=[F,G_{p-1}]\in  S_{k+p+1}(\Gamma^2(\mathcal{O}_{\boldsymbol{K}}),\text{det}^{(k+p+1)/2})_{\mathbb{Z}_{(p)}}^{\text{sym}}.
$$
\end{proof}
\begin{Ex}
\label{ex.1}
$$
\varTheta (E_{4,\boldsymbol{K}}) \equiv 5E_{4,\boldsymbol{K}}\cdot \chi_8+2F_{12}
\pmod{7}.
$$
\end{Ex}
\section{Eisenstein case}
\label{Sect.3}
In this section we deal with the Hermitian modular forms related to the Eisenstein series
of degree 2.
\subsection{Krieg's result}
\label{Sect.3.1}
We denote by $h_{\boldsymbol{K}}$ the class number of $\boldsymbol{K}$ and
$w_{\boldsymbol{K}}$ the order of the unit group of $\boldsymbol{K}$.

Given a prime $q$ dividing $D_{\boldsymbol{K}}:=-d_{\boldsymbol{K}}$ define the $q$-factor $\chi_q$ of 
$\chi_{\boldsymbol{K}}$ (cf. Miyake \cite{Miyake}, p.80). Then $\chi_{\boldsymbol{K}}$ can be
decomposed as
$$
\chi_{\boldsymbol{K}}=\prod_{q\mid D_{\boldsymbol{K}}}\chi_q.
$$
We set
$$
a_{D_{\boldsymbol{K}}}(\ell):=\prod_{q\mid D_{\boldsymbol{K}}}(1+\chi_q(-\ell)).
$$
Let $D_{\boldsymbol{K}}=mn$ with coprime $m$,\,$n$. We set
$$
\psi_m:=\prod_{\substack{q:\text{prime}\\ q\mid m}}\chi_q,\qquad
\psi_1:=1.
$$
For $H\in\Lambda_2(\boldsymbol{K})$ with $H\ne O_2$, we define
$$
\varepsilon (H):=\text{max}\{ \ell\in\mathbb{N}\,\mid\, \ell^{-1}H\in
\Lambda_2(\boldsymbol{K})\}.
$$
Krieg's result is stated as follows:
\begin{Thm}
\label{Krieg}
{\rm (Krieg \cite{Krieg})}\;
Assume that $k \equiv 0 \pmod{w_{\boldsymbol{K}}}$ and $k>4$.
Then there exists a modular form
$F_{k,\boldsymbol{K}}\in M_k(\Gamma_2(\mathcal{O}_{\boldsymbol{K}}))_{\mathbb{Q}}^{{\rm sym}}$
whose Fourier coefficient $a(F_{k,\boldsymbol{K}};H)$ is given by
$$
a(F_{k,\boldsymbol{K}};H)=
\begin{cases}
\displaystyle\frac{4k(k-1)}{B_k\cdot B_{k-1,\chi_{\boldsymbol{K}}}}
\sum_{0<d\mid\varepsilon (H)}d^{k-1}
G_{\boldsymbol{K}}\left(k-2;\frac{D_{\boldsymbol{K}}\cdot{\rm det}(H)}{d^2} \right)
{\rm if}\; H>0,\\
\displaystyle
-\frac{2k}{B_k}\sum_{0<d\mid \varepsilon (H)}d^{k-1}
\quad {\rm if\; rank}(H)=1,\\
1 \quad {\rm if}\; H=O_2,
\end{cases}
$$ 
where $B_m$(resp. $B_{m,\chi}$) is the Bernoulli (resp. the generalized Bernoulli)
number and
$$
G_{\boldsymbol{K}}(s;N):=
\frac{1}{a_{D_{\boldsymbol{K}}}(N)}
\sum_{0<d\mid N}
\sum_{\substack{mn=D_{\boldsymbol{K}}\\ (m,n)=1}}\psi_m(-N/d)\psi_n(d)d^s.
$$
\end{Thm}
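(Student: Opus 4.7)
The plan is to identify $F_{k,\boldsymbol{K}}$ as (a scalar multiple of) the Hermitian Eisenstein series $E_{k,\boldsymbol{K}}^{(2)}$ and to compute its Fourier expansion directly. Since $E_{k,\boldsymbol{K}}^{(2)}$ converges absolutely for $k>4$ and is manifestly modular and symmetric, the content of the theorem is purely the explicit shape of its Fourier coefficients. The natural tool is the Maass lift construction, which realizes $E_{k,\boldsymbol{K}}^{(2)}$ as the lift of an elliptic modular form attached to $\boldsymbol{K}$; the lift formula then yields the divisor-sum expression for $a(F_{k,\boldsymbol{K}};H)$ with rank-two $H$.

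First, I would fix the normalization so that $a(F_{k,\boldsymbol{K}};O_2)=1$; this forces the prefactor $4k(k-1)/(B_k B_{k-1,\chi_{\boldsymbol{K}}})$ via the Siegel $\Phi$-operator applied twice (once giving the weight-$k$ elliptic Eisenstein series for $SL_2(\mathbb{Z})$, whose constant term is $1$ after normalization $-2k/B_k$, and once more giving the Dirichlet $L$-value $L(k-1,\chi_{\boldsymbol{K}})$ whose functional equation involves $B_{k-1,\chi_{\boldsymbol{K}}}$). This step also forces the rank-$1$ formula, because the Siegel $\Phi$-operator sends $E_{k,\boldsymbol{K}}^{(2)}$ to the normalized weight-$k$ elliptic Eisenstein series for $SL_2(\mathbb{Z})$, so the rank-$1$ Fourier coefficient at $H$ with $\varepsilon(H)=m$ must agree with that of $-(2k/B_k)\sum_{0<d\mid m} d^{k-1}$.

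Second, for rank-$2$ matrices $H$, I would apply the Maass-lift recipe: write
$$
a(F_{k,\boldsymbol{K}};H)=\sum_{0<d\mid \varepsilon(H)} d^{k-1}\,\alpha_H\!\left(\tfrac{D_{\boldsymbol{K}}\det(H)}{d^2}\right),
$$
where $\alpha_H(N)$ is determined by local Siegel (Hermitian) densities at every rational prime. At primes unramified in $\boldsymbol{K}$, standard computations give a simple Euler factor; at primes $q\mid D_{\boldsymbol{K}}$, one computes the ramified local density and recognizes the result as $1+\chi_q(-N)$, which is precisely the definition of the normalizing factor $a_{D_{\boldsymbol{K}}}(N)$. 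Combining the global product yields exactly $G_{\boldsymbol{K}}(k-2;\,\cdot\,)$ as written.

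The main obstacle is the ramified local computation: obtaining the clean formula $G_{\boldsymbol{K}}(s;N)$ requires a careful analysis of the Hermitian Siegel series at primes dividing $D_{\boldsymbol{K}}$, including the subtle splitting $D_{\boldsymbol{K}}=mn$ with $(m,n)=1$ that produces the character combination $\psi_m(-N/d)\psi_n(d)$. Once the local calculations are assembled and the normalizing constant is fixed by the iterated $\Phi$-operator, the formula for $a(F_{k,\boldsymbol{K}};H)$ in all three cases follows. I would cite Krieg's original paper \cite{Krieg} for the detailed local density computation rather than redo it here.
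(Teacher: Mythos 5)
There is a genuine gap, and it sits in your very first sentence. The paper gives no proof of this theorem---it is quoted verbatim from Krieg \cite{Krieg}---but the Remark that the authors place immediately after it is precisely a warning against the strategy you propose: the identification of $F_{k,\boldsymbol{K}}$ with (a multiple of) the Hermitian Eisenstein series $E_{k,\boldsymbol{K}}^{(2)}$ is valid \emph{only} when $h_{\boldsymbol{K}}=1$. The theorem, however, is stated for arbitrary imaginary quadratic $\boldsymbol{K}$, and it is an \emph{existence} statement for a modular form with the prescribed Fourier coefficients, not a computation of the Fourier expansion of $E_{k,\boldsymbol{K}}^{(2)}$. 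If you start from the Eisenstein series and compute its coefficients by local densities, then for $h_{\boldsymbol{K}}>1$ you will not arrive at the displayed formula (the Eisenstein series does not lie in the Maass space in general), so your argument proves the theorem only in the class-number-one case. This matters for the paper: Theorem \ref{main1} is stated for general $\boldsymbol{K}$ with $h_{\boldsymbol{K}}\not\equiv 0 \pmod p$, and only its Corollary \ref{cor1} specializes to $h_{\boldsymbol{K}}=1$ to transfer the conclusion to $E_{p+1,\boldsymbol{K}}^{(2)}$.

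The correct route---and the one Krieg actually takes---runs in the opposite direction: one \emph{constructs} $F_{k,\boldsymbol{K}}$ as the Maass lift of a weight $k$, index $1$ Hermitian Jacobi--Eisenstein series (equivalently, from its associated elliptic modular form), proves that the lift is modular for the full group $\Gamma^2(\mathcal{O}_{\boldsymbol{K}})$ and symmetric, and reads off the coefficient formula, including the arithmetic function $G_{\boldsymbol{K}}(s;N)$ with its splitting $D_{\boldsymbol{K}}=mn$, from the lift recipe. No Siegel--Weil or local density computation for the degree $2$ Eisenstein series is needed, and none would suffice here. Your second step (the divisor sum over $d\mid\varepsilon(H)$ weighted by $d^{k-1}$) is indeed the shape of a Maass lift, and your normalization of the rank $0$ and rank $1$ coefficients via the Siegel $\Phi$-operator is consistent with the lift's image being the normalized elliptic Eisenstein series; but these observations must be anchored to the lift construction itself, not to $E_{k,\boldsymbol{K}}^{(2)}$. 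Since the paper simply cites \cite{Krieg}, deferring the details there is acceptable---but the framing must not presuppose the identity $F_{k,\boldsymbol{K}}=E_{k,\boldsymbol{K}}^{(2)}$ that the authors explicitly disclaim.
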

\begin{Rem}
In \cite{Krieg}, Krieg stated that the modular form $F_{k,\boldsymbol{K}}$ coincides with the
weight $k$ Hermitian Eisenstein series (in his notation $E_2^k$) for any
$\boldsymbol{K}$. However it is known that it is true only for the case 
$h_{\boldsymbol{K}}=1$.
\end{Rem}

The first main result is as follows:
\begin{Thm}
\label{main1}
Let $F_{k,\boldsymbol{K}}$ be the Hermitian modular form introduced in Theorem \ref{Krieg}.
If $p>3$ is a prime number such that $\chi_{\boldsymbol{K}}(p)=-1$ and
$h_{\boldsymbol{K}} \not\equiv 0 \pmod{p}$, then
$$
\varTheta (F_{p+1,\boldsymbol{K}}) \equiv 0 \pmod{p}.
$$
\end{Thm}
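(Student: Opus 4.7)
The plan is to work directly from Krieg's explicit formula (Theorem \ref{Krieg}) and prove $p$-divisibility of $\det(H)\,a(F_{p+1,\boldsymbol{K}};H)$ for every $H\in\Lambda_2(\boldsymbol{K})$. Since the theta operator acts on Fourier coefficients by multiplication by $\det(H)$, and since $\det(H)=0$ whenever $\mathrm{rank}(H)<2$, the content of the theorem is the case $H>0$. For such $H$, I would substitute
$$a(F_{p+1,\boldsymbol{K}};H) = \frac{4(p+1)p}{B_{p+1}\,B_{p,\chi_{\boldsymbol{K}}}} \sum_{0<d\mid\varepsilon(H)} d^{\,p}\,G_{\boldsymbol{K}}\!\left(p-1;\tfrac{D_{\boldsymbol{K}}\det(H)}{d^{2}}\right)$$
and analyze the two factors separately via $p$-adic valuations.

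The first step is to show that the scalar prefactor is a $p$-adic unit. Since $p>3$, one has $(p-1)\nmid(p+1)$, so von Staudt--Clausen gives $v_p(B_{p+1})=0$. For the generalized Bernoulli number I would combine the Kummer congruence
$B_{p,\chi_{\boldsymbol{K}}}/p \equiv B_{1,\chi_{\boldsymbol{K}}}\pmod{p}$
with the analytic class number formula $|B_{1,\chi_{\boldsymbol{K}}}| = 2h_{\boldsymbol{K}}/w_{\boldsymbol{K}}$; the hypothesis $p\nmid h_{\boldsymbol{K}}$ (and $p>3$, so $p\nmid w_{\boldsymbol{K}}$) then forces $v_p(B_{p,\chi_{\boldsymbol{K}}})=1$. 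Thus the single factor of $p$ sitting in the numerator of Krieg's formula is consumed by the $p$ hidden in $B_{p,\chi_{\boldsymbol{K}}}$, and the remaining divisibility must come from the arithmetic sum (together with $\det(H)$).

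For the arithmetic sum I would split into two cases on whether $p\mid\det(H)$. If yes, $\det(H)$ itself supplies the required factor of $p$. If no, then $\chi_{\boldsymbol{K}}(p)=-1$ forces $p\nmid D_{\boldsymbol{K}}$, and so $p$ divides none of the quantities $N_d:=D_{\boldsymbol{K}}\det(H)/d^2$ that feature inside $G_{\boldsymbol{K}}(p-1;\cdot)$. Fermat's little theorem then collapses each $d^{p}$ to $d$ and each inner exponent $p-1$ to $0$, reducing $G_{\boldsymbol{K}}(p-1;N_d)$ modulo $p$ to $G_{\boldsymbol{K}}(0;N_d)$, which is essentially a divisor sum $\sum_{e\mid N_d}\chi_{\boldsymbol{K}}(e)$ counting ideals of norm $N_d$ in $\mathcal{O}_{\boldsymbol{K}}$. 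Under the inertness assumption $\chi_{\boldsymbol{K}}(p)=-1$, I would exhibit an involution on the divisors of $N_d$ that pairs contributions with opposite $\chi_{\boldsymbol{K}}$-sign, producing the cancellation modulo $p$ once the whole expression is assembled with the outer factor $\det(H)$.

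The Bernoulli bookkeeping in the first step is routine given von Staudt--Clausen, Kummer, and the class number formula. The hard part will be the weight-$2$ reduction in the second case of the sum analysis: carefully pushing the mod-$p$ simplification through the double summation and rigorously identifying the sign-reversing symmetry on the divisor lattice of $N$ that is forced by $\chi_{\boldsymbol{K}}(p)=-1$. This is the genuinely arithmetic input of the argument, and is where the hypotheses on $p$ must be fully exploited.
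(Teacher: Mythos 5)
Your overall architecture matches the paper's: show the scalar prefactor $4(p+1)p/(B_{p+1}B_{p,\chi_{\boldsymbol{K}}})$ is a $p$-adic unit via Kummer and $B_{1,\chi_{\boldsymbol{K}}}=-2h_{\boldsymbol{K}}/w_{\boldsymbol{K}}$, then split on whether $p\mid\det(H)$ and prove $G_{\boldsymbol{K}}(p-1;N)\equiv 0\pmod p$ for $N=D_{\boldsymbol{K}}\det(H)/d^2$ when $p\nmid\det(H)$. But the key step is where your sketch goes wrong. You reduce $G_{\boldsymbol{K}}(p-1;N)$ mod $p$ to ``essentially a divisor sum $\sum_{e\mid N}\chi_{\boldsymbol{K}}(e)$'' and propose a sign-reversing involution on the divisors of $N$. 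That identification is not correct: $G_{\boldsymbol{K}}$ is a sum over the coprime factorizations $D_{\boldsymbol{K}}=mn$ weighted by the genus characters $\psi_m,\psi_n$, normalized by $a_{D_{\boldsymbol{K}}}(N)$, and it does not collapse to $\sum_{e\mid N}\chi_{\boldsymbol{K}}(e)$ when $N$ is divisible by ramified primes. Moreover the relevant $N$ is \emph{always} divisible by ramified primes in the worst case: already for $\boldsymbol{K}=\mathbb{Q}(i)$ and $H=[1,0,1]$ one has $N=D_{\boldsymbol{K}}=4$, where $\sum_{e\mid 4}\chi_{\boldsymbol{K}}(e)=1\neq 0$, so no involution on divisors of $N$ can produce the cancellation. (Concretely $G_{\boldsymbol{K}}(s;4)=1-4^{s}$, which vanishes mod $p$ at $s=p-1$ by Fermat, not by a divisor pairing.)

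The paper's proof supplies exactly the ingredient you are missing. After a product formula (its Lemma 3.6) it distinguishes two cases according to $\chi_{\boldsymbol{K}}(N)$. When $\chi_{\boldsymbol{K}}(N_2)=-1$ (an inert prime divides the unramified part of $N$ to an odd power), the geometric factor $\sum_{t=0}^{\beta_q}\chi_{\boldsymbol{K}}(q)^tq^{(p-1)t}\equiv\sum_t(-1)^t=0$ does the job --- this is the only case your involution idea actually covers. When $\chi_{\boldsymbol{K}}(N)=0$ and $\chi_{\boldsymbol{K}}(N_2)=1$, the cancellation instead comes from pairing each decomposition $D_{\boldsymbol{K}}=mn$ with $D_{\boldsymbol{K}}=(D_{\boldsymbol{K}}/m)\cdot m$ and using the genus-character antisymmetry $\psi_m(-1)=-\psi_{D_{\boldsymbol{K}}/m}(-1)$ (because $\chi_{\boldsymbol{K}}(-1)=-1$ for an imaginary quadratic field) together with $\psi_m(N_2)=\psi_{D_{\boldsymbol{K}}/m}(N_2)$. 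This is an involution on the genus decompositions of $D_{\boldsymbol{K}}$, not on the divisors of $N$, and without it your argument fails on the very first rank-two Fourier coefficient. A secondary, smaller point: von Staudt--Clausen only gives $p$-integrality of $B_{p+1}$ for $p>3$; to conclude $v_p(B_{p+1})=0$ you still need the Kummer congruence $B_{p+1}/(p+1)\equiv B_2/2\pmod p$, which is what the paper uses.
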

\begin{Cor}
\label{cor1}
Assume that $h_{\boldsymbol{K}}=1$ and $p>3$ is a prime number such
that $\chi_{\boldsymbol{K}}(p)=-1$.
Then the weight $p+1$ Hermitian Eisenstein series $E_{p+1,\boldsymbol{K}}^{(2)}$ satisfies
$$
\varTheta (E_{p+1,\boldsymbol{K}}^{(2)}) \equiv 0 \pmod{p}.
$$
\end{Cor}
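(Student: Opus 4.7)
The plan is to show directly from the Fourier coefficient formula of Theorem~\ref{Krieg} that every coefficient of $\varTheta(F_{p+1,\boldsymbol{K}})$ vanishes modulo $p$. Since $\det(H)=0$ for $\mathrm{rank}(H)\leq 1$ and the factor $\det(H)$ produced by the theta operator already kills coefficients with $p\mid\det(H)$, it suffices to prove
\[
a(F_{p+1,\boldsymbol{K}};H)\equiv 0\pmod{p}
\]
for every positive-definite $H\in\Lambda_{2}(\boldsymbol{K})$ with $p\nmid\det(H)$. From $\chi_{\boldsymbol{K}}(p)=-1$ we have $p\nmid D_{\boldsymbol{K}}$, hence $p\nmid D_{\boldsymbol{K}}\det(H)$, and since $\varepsilon(H)^{2}$ divides $D_{\boldsymbol{K}}\det(H)$ as integers we also get $p\nmid d$ for every $d\mid\varepsilon(H)$.

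Write Krieg's formula as $a(F_{p+1,\boldsymbol{K}};H)=C\cdot S$ with $C=4(p+1)p/(B_{p+1}B_{p,\chi_{\boldsymbol{K}}})$ and $S=\sum_{d\mid\varepsilon(H)}d^{p}G_{\boldsymbol{K}}(p-1;D_{\boldsymbol{K}}\det(H)/d^{2})$. Clearly $v_{p}(4(p+1)p)=1$; Kummer's congruence (applicable since $p-1\nmid p+1$ for $p\geq 5$) yields $B_{p+1}\equiv(p+1)B_{2}/2\pmod{p}$, so $v_{p}(B_{p+1})=0$. The analogous congruence for generalized Bernoulli numbers attached to the odd character $\chi_{\boldsymbol{K}}$ (whose conductor is coprime to $p$) reads
\[
\bigl(1-\chi_{\boldsymbol{K}}(p)\,p^{p-1}\bigr)\frac{B_{p,\chi_{\boldsymbol{K}}}}{p}\equiv\bigl(1-\chi_{\boldsymbol{K}}(p)\bigr)B_{1,\chi_{\boldsymbol{K}}}\pmod{p}.
\]
Since $p\mid p^{p-1}$ the left Euler factor is $\equiv 1\pmod{p}$, and since $\chi_{\boldsymbol{K}}(p)=-1$ the right Euler factor equals $2$; combined with the class number formula $B_{1,\chi_{\boldsymbol{K}}}=-2h_{\boldsymbol{K}}/w_{\boldsymbol{K}}$ and the hypotheses $p\nmid h_{\boldsymbol{K}}$, $\gcd(p,w_{\boldsymbol{K}})=1$, this gives $v_{p}(B_{p,\chi_{\boldsymbol{K}}})=1$, hence $v_{p}(C)=0$.

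It remains to prove $S\equiv 0\pmod{p}$. Every $d\mid\varepsilon(H)$ and every divisor $e$ of $D_{\boldsymbol{K}}\det(H)/d^{2}$ is coprime to $p$, so Fermat's little theorem reduces $d^{p}$ to $d$ and each $e^{p-1}$ to $1$ modulo $p$; hence $G_{\boldsymbol{K}}(p-1;M)\equiv G_{\boldsymbol{K}}(0;M)\pmod{p}$ for each relevant $M$, and the proof reduces to the identity $G_{\boldsymbol{K}}(0;M)=0$. This identity follows from a pairing argument: the involution $(m,n)\mapsto(n,m)$ is fixed-point-free on the set of coprime decompositions $mn=D_{\boldsymbol{K}}$ (because $D_{\boldsymbol{K}}$ is never a square), the inner sum $T(m,n;M):=\sum_{e\mid M}\psi_{m}(M/e)\psi_{n}(e)$ is symmetric in $(m,n)$ via $e\mapsto M/e$, and the paired contribution equals $(\psi_{m}(-1)+\psi_{n}(-1))T(m,n;M)=0$ since $\psi_{m}\psi_{n}=\chi_{\boldsymbol{K}}$ being odd forces $\psi_{m}(-1)+\psi_{n}(-1)=0$. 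The main obstacle is the unit check $v_{p}(C)=0$: it rests on the Kummer-type congruence with an Euler factor, and the hypothesis $\chi_{\boldsymbol{K}}(p)=-1$ is exactly what prevents that factor from vanishing modulo $p$, while $p\nmid h_{\boldsymbol{K}}$ makes $B_{1,\chi_{\boldsymbol{K}}}$ itself a $p$-adic unit.
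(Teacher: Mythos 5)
Your argument is correct, and for the key step it takes a genuinely different route from the paper. The paper deduces this corollary from Theorem~\ref{main1}, whose proof (Proposition~\ref{prop1}) shares your skeleton: reduce to showing $a(F_{p+1,\boldsymbol{K}};H)\equiv 0\pmod{p}$ for $H>0$ with $p\nmid D_{\boldsymbol{K}}\det(H)$, verify via Kummer-type congruences that the Bernoulli factor is a $p$-adic unit (your Euler-factor bookkeeping matches the paper's computation of the factor $A$), and then kill the divisor sum. Where you diverge is in killing that sum: the paper first observes that $\chi_{\boldsymbol{K}}(D_{\boldsymbol{K}}\det(H)/d^{2})$ is always $0$ or $-1$, proves a product formula for the double sum defining $G_{\boldsymbol{K}}$ (Lemma~\ref{lemma2}), and then argues mod $p$ in two cases --- $\chi_{\boldsymbol{K}}(N)=-1$ via a geometric-series cancellation at an inert prime of odd exponent, and $\chi_{\boldsymbol{K}}(N)=0$ via a pairing $m\leftrightarrow D_{\boldsymbol{K}}/m$. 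You instead reduce $G_{\boldsymbol{K}}(p-1;N)$ to $G_{\boldsymbol{K}}(0;N)$ by Fermat (legitimate since $p\nmid N$, and the prefactor $1/a_{D_{\boldsymbol{K}}}(N)$ is a power of $1/2$, hence a $p$-unit) and prove the exact identity $G_{\boldsymbol{K}}(0;N)=0$ by the involution $((m,n),d)\mapsto((n,m),N/d)$ together with $\psi_{m}(-1)+\psi_{n}(-1)=0$; this is cleaner, needs no product formula, no case distinction, and not even the observation that $\chi_{\boldsymbol{K}}$ avoids the value $+1$ on the arguments that occur. One small correction to your justification: the involution is fixed-point-free because $m=n$ together with $(m,n)=1$ would force $D_{\boldsymbol{K}}=1$, not because $D_{\boldsymbol{K}}$ is a non-square (indeed $D_{\boldsymbol{K}}=4$ for $\mathbb{Q}(i)$). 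Finally, one sentence is missing for the corollary as stated: you prove the vanishing for Krieg's form $F_{p+1,\boldsymbol{K}}$, and you still need the identification $F_{p+1,\boldsymbol{K}}=E^{(2)}_{p+1,\boldsymbol{K}}$, which is exactly where the hypothesis $h_{\boldsymbol{K}}=1$ enters (see the remark following Theorem~\ref{Krieg}); with $h_{\boldsymbol{K}}=1$ the condition $p\nmid h_{\boldsymbol{K}}$ used in your unit computation is automatic.
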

\begin{Rem}
{\rm (1)}\;
In the above theorem, the weight condition $k=p+1 \equiv 0 \pmod{w_{\boldsymbol{K}}}$
is automatically satisfied.
In fact, 
in the case $\boldsymbol{K}=\mathbb{Q}(i)$, the condition $\chi_{\boldsymbol{K}}(p)=-1$
implies $p \equiv 3 \pmod{4}$. Then $p+1 \equiv 0 \pmod{4}$. In the case 
$\boldsymbol{K}=\mathbb{Q}(\sqrt{3}\,i)$, it follows from 
the condition $\chi_{\boldsymbol{K}}(p)=-1$ that $p \equiv -1 \pmod{3}$. Since $p$ is odd,
we have $p+1 \equiv 0 \pmod{6}$. Since $w_{\boldsymbol{K}}=2$ in the other cases,
$p+1 \equiv 0 \pmod{w_{\boldsymbol{K}}}$ is obvious.
\\
{\rm (2)}\; It is known that there are infinitely many $\boldsymbol{K}$ and $p$ satisfying
$$
\chi_{\boldsymbol{K}}(p)=-1\;\; \text{and}\;\;
h_{\boldsymbol{K}} \not\equiv 0 \pmod{p}
$$
{\rm (e.g. cf. Horie-Onishi \cite{H-O})}.\\
{\rm (3)}\; Our interest is to construct an element of mod $p$ kernel of the theta operator with the possible minimum weight 
(i.e., the weight is its filtration. For the details on the filtration 
of the mod $p$ modular forms, see Serre \cite{Serre}). 
If we do not restrict on the weight, we can construct some trivial examples in several ways. 
For example, the power $F^p$ of a modular form $F$ is such a trivial example. 
If $F$ is of weight $k$, then its weight is $pk$ and this is too large.
We suppose that the possible minimum weight is $p+1$ for mod $p$ non-singular cases
(cf. B\"{o}cherer-Kikuta-Takemori \cite{B-K-T}).
\end{Rem}
For the proof of the theorem, it is sufficient to show the following:
\begin{Prop}
\label{prop1}
Assume that
$p>3$ is a prime number such that $\chi_{\boldsymbol{K}}(p)=-1$ and
$h_{\boldsymbol{K}} \not\equiv 0 \pmod{p}$. Let $a(F_{k,\boldsymbol{K}};H)$
be the Fourier coefficient of $F_{k,\boldsymbol{K}}$ at $H$. If ${\rm det}(H)\not\equiv 0\pmod{p}$,
then
\begin{equation}
\label{star}
a(F_{p+1,\boldsymbol{K}};H) \equiv 0 \pmod{p}.
\end{equation}
\end{Prop}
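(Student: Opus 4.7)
My plan is to apply Krieg's explicit formula (Theorem \ref{Krieg}) to the Fourier coefficient and handle the overall prefactor and the divisor sum separately modulo $p$. Setting $k=p+1$, the formula reads
$$a(F_{p+1,\boldsymbol{K}};H) = \frac{4(p+1)p}{B_{p+1}\,B_{p,\chi_{\boldsymbol{K}}}} \sum_{0<d\mid\varepsilon(H)} d^{\,p}\,G_{\boldsymbol{K}}\!\left(p-1;\,\frac{D_{\boldsymbol{K}}\det(H)}{d^{2}}\right),$$
so I would split the argument into two parts.

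First I would show the prefactor is a $p$-adic unit, using Kummer's congruences together with the class number formula. For $p>3$ one has $p-1\nmid p+1$, so the classical Kummer congruence gives $B_{p+1}/(p+1)\equiv B_{2}/2\pmod{p}$, i.e.\ $B_{p+1}\equiv 1/12\pmod{p}$, a $p$-unit. The character version of Kummer's congruence combined with $\chi_{\boldsymbol{K}}(p)=-1$ gives
$$\frac{B_{p,\chi_{\boldsymbol{K}}}}{p}\equiv\bigl(1-\chi_{\boldsymbol{K}}(p)\bigr)B_{1,\chi_{\boldsymbol{K}}}=2B_{1,\chi_{\boldsymbol{K}}}=-\frac{4h_{\boldsymbol{K}}}{w_{\boldsymbol{K}}}\pmod{p}$$
by the analytic class number formula, and this is a $p$-unit because $p\nmid h_{\boldsymbol{K}}$ and $p>3$ ensures $p\nmid w_{\boldsymbol{K}}\in\{2,4,6\}$. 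Hence $v_{p}(B_{p,\chi_{\boldsymbol{K}}})=1$, the single $p$ in the numerator cancels, and the prefactor is a $p$-adic unit congruent modulo $p$ to $-12w_{\boldsymbol{K}}/h_{\boldsymbol{K}}$.

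Next I would reduce the divisor sum modulo $p$ by Fermat's little theorem. Since $\chi_{\boldsymbol{K}}(p)=-1$ forces $p$ to be inert in $\boldsymbol{K}$, we have $p\nmid D_{\boldsymbol{K}}$; with the hypothesis $p\nmid\det(H)$ and $\varepsilon(H)^{2}\mid\det(H)$, every $d\mid\varepsilon(H)$ is coprime to $p$ and every divisor $d'$ of $N_{d}:=D_{\boldsymbol{K}}\det(H)/d^{2}$ is coprime to $p$. Fermat then gives $d^{\,p}\equiv d$ and $(d')^{p-1}\equiv 1\pmod{p}$, so the divisor sum collapses modulo $p$ to $\sum_{d\mid\varepsilon(H)} d\,G_{\boldsymbol{K}}(0;N_{d})$. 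The identity
$$\sum_{\substack{mn=D_{\boldsymbol{K}}\\ (m,n)=1}}\psi_{m}(-N/d')\psi_{n}(d') = \prod_{q\mid D_{\boldsymbol{K}}}\bigl(\chi_{q}(-N/d')+\chi_{q}(d')\bigr)$$
rewrites each $G_{\boldsymbol{K}}(0;N_{d})$ as a twisted divisor sum, and for divisors $d'$ coprime to $D_{\boldsymbol{K}}$ each bracket factors as $\chi_{q}(d')(1+\chi_{q}(-N))$, cancelling against $a_{D_{\boldsymbol{K}}}(N)$ in the denominator.

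The main obstacle is the last step: showing that the reduced double sum $\sum_{d\mid\varepsilon(H)} d\cdot G_{\boldsymbol{K}}(0;N_{d})$ vanishes modulo $p$. I would approach this by interchanging the order of summation and reindexing by a single divisor of $D_{\boldsymbol{K}}\det(H)$, so that the inner contribution becomes a $\chi_{\boldsymbol{K}}$-twisted sum, and then invoke $1+\chi_{\boldsymbol{K}}(p)\equiv 0\pmod{p}$ to produce cancellation. Tracking the precise range of divisors as $d$ runs over $\varepsilon(H)$, and distinguishing the squarefree $D_{\boldsymbol{K}}$ case from the case $4\mid D_{\boldsymbol{K}}$ (which arises for $\boldsymbol{K}=\mathbb{Q}(i)$), are the delicate technical points; the underlying vanishing mechanism parallels the one used by B\"ocherer-Nagaoka \cite{B-N} for Siegel Eisenstein series of weight $p+1$.
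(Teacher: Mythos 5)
Your first two steps coincide with the paper's proof: the prefactor $4(p+1)p/(B_{p+1}B_{p,\chi_{\boldsymbol{K}}})$ is shown to be a $p$-adic unit exactly as you do, via $B_{p+1}/(p+1)\equiv 1/12$ and $B_{p,\chi_{\boldsymbol{K}}}/p\equiv(1-\chi_{\boldsymbol{K}}(p))B_{1,\chi_{\boldsymbol{K}}}=-(1-\chi_{\boldsymbol{K}}(p))\cdot 2h_{\boldsymbol{K}}/w_{\boldsymbol{K}}\pmod{p}$, and the reduction of the exponents by Fermat's little theorem is also how the paper proceeds. The problem is the last step, which is where the whole content of the proposition lies, and there your proposed mechanism does not work. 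You want cancellation to come from $1+\chi_{\boldsymbol{K}}(p)\equiv 0\pmod{p}$, but since $\chi_{\boldsymbol{K}}(p)=-1$ forces $p\nmid D_{\boldsymbol{K}}$ and you assume $p\nmid\det(H)$, the prime $p$ never divides $N_d=D_{\boldsymbol{K}}\det(H)/d^2$, hence never occurs among the divisors $d'$ over which $G_{\boldsymbol{K}}$ is summed; $\chi_{\boldsymbol{K}}(p)$ simply does not enter the divisor sum, so no reindexing can make that factor appear.

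The missing idea is the arithmetic fact, special to Hermitian forms, that $\chi_{\boldsymbol{K}}\bigl(D_{\boldsymbol{K}}\det(H)/d^2\bigr)$ equals $0$ or $-1$, never $+1$. Granting this, the paper shows that \emph{each individual term} $G_{\boldsymbol{K}}(p-1;N_d)$ is $\equiv 0\pmod{p}$ (no interchange of summation is needed), by splitting into two cases. If $\chi_{\boldsymbol{K}}(N)=-1$, some prime $q$ with $\chi_{\boldsymbol{K}}(q)=-1$ divides $N$ to an odd power $\beta_q$, and the corresponding local factor satisfies $\sum_{t=0}^{\beta_q}\chi_{\boldsymbol{K}}(q)^tq^{(p-1)t}\equiv\sum_{t=0}^{\beta_q}(-1)^t=0\pmod{p}$, which is essentially the computation you began but whose punch line requires $\beta_q$ odd, i.e.\ requires $\chi_{\boldsymbol{K}}(N)=-1$. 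If $\chi_{\boldsymbol{K}}(N)=0$, the sum over unitary divisors $m$ of $D_{\boldsymbol{K}}$ is killed by pairing $m$ with $D_{\boldsymbol{K}}/m$ and using $\psi_m(-1)=-\psi_{D_{\boldsymbol{K}}/m}(-1)$ together with $\psi_m(N_2)=\psi_{D_{\boldsymbol{K}}/m}(N_2)$. Without the input $\chi_{\boldsymbol{K}}(N)\ne 1$ the claimed vanishing is false (for $N$ with $\chi_{\boldsymbol{K}}(N)=1$ the sum $\sum_{d'\mid N}\chi_{\boldsymbol{K}}(d')$ is positive), so your outline as written cannot be completed; you need to state and use this property of $D_{\boldsymbol{K}}\det(H)$ for $H\in\Lambda_2(\boldsymbol{K})$, $H>0$.
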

\begin{proof}
By Theorem \ref{Krieg}, the Fourier coefficient $a(F_{p+1,\boldsymbol{K}};H)$ is expressed as
$$
a(F_{p+1,\boldsymbol{K}};H)=\frac{4(p+1)p}{B_{p+1}\cdot B_{p,\chi_{\boldsymbol{K}}}}
\sum_{0<d\mid \varepsilon (H)}d^p\,G_{\boldsymbol{K}}\left(p-1;\frac{D_{\boldsymbol{K}}\cdot\text{det}(H)}{d^2} \right)
$$
for $H>0$. First we look at the factor
$$
A:=\frac{4(p+1)p}{B_{p+1}\cdot B_{p,\chi_{\boldsymbol{K}}}}.
$$
By Kummer's congruence relation, we obtain
\begin{align*}
& \displaystyle \bullet\quad\frac{B_{p+1}}{p+1} \equiv \frac{B_2}{2}=\frac{1}{12} \pmod{p}
\vspace{2mm}
\\
& \displaystyle \bullet\quad\frac{B_{p,\chi_{\boldsymbol{K}}}}{p}
\equiv (1-\chi_{\boldsymbol{K}}(p))B_{1,\chi_{\boldsymbol{K}}}                                                                         
 = (1-\chi_{\boldsymbol{K}}(p))\frac{-2h_{\boldsymbol{K}}}{w_{\boldsymbol{K}}}
                                                                              \pmod{p}.
\end{align*}
Since $p>3$, $\chi_{\boldsymbol{K}}(p)=-1$, and $h_{\boldsymbol{K}}\not\equiv 0\pmod{p}$, the factor $A$ is a $p$-adic unit.

Next we shall show that, if $\text{det}(H)\not\equiv 0\pmod{p}$, then the factor
$$
B:=\sum_{0<d\mid \varepsilon (H)}d^p\,G_{\boldsymbol{K}}\left(p-1;\frac{D_{\boldsymbol{K}}\cdot\text{det}(H)}{d^2} \right)
$$
satisfies 
$$
B \equiv 0 \pmod{p}.
$$
We note that
$$
\chi_{\boldsymbol{K}}\left(\frac{D_{\boldsymbol{K}}\cdot\text{det}(H)}{d^2} \right)=0\;\;\text{or}\;\; -1.
$$
Therefore, it is sufficient to show that, if $p\nmid N$, $\chi_{\boldsymbol{K}}(N)=0$ or $-1$, then
\begin{equation}
\label{starstar}
G_{\boldsymbol{K}}(p-1,N) \equiv 0 \pmod{p}.
\end{equation}
To prove the congruence relation (\ref{starstar}), we need a kind of product formula for $G_{\boldsymbol{K}}(s,N)$.
Let $S=S_{\boldsymbol{K}}$ be the set of prime number which ramifies in $\boldsymbol{K}$. For $N\in\mathbb{N}$,
we decompose $N$ as
$$
N=N_1\cdot N_2,\qquad N_1=\prod_{\substack{q\in S\\ q\mid N}}q^{\beta_q},\quad N_2=\prod_{\substack{q\notin S\\ q\mid N}}q^{\beta_q}.
$$
\begin{Lem}
\label{lemma2}
Notation is as above. We have
\begin{align*}
& \sum_{0<d\mid N}\sum_{\substack{mn=D_{\boldsymbol{K}}\\ (m,n)=1}}\psi_m(-N/d)\psi_n(d)d^s\\
&=\prod_{q\mid N_2}\sum_{t=0}^{\beta_q}\chi_{\boldsymbol{K}}(q)^tq^{st}\cdot
            \sum_{\substack{mn=D_{\boldsymbol{K}}\\ (m,n)=1}}\psi_m(-1)\psi_m(N_2)\cdot\prod_{\substack{q\mid m\\ q\mid N_1}}\psi_n(q)^{\beta_q}q^{s\beta_q}\\
&\quad \cdot \prod_{\substack{q\mid n\\ q\mid N_1}} \psi_m(q)^{\beta_q}.  
\end{align*}
\end{Lem}
\begin{proof} We have
\begin{align*}
& \sum_{0<d\mid N}\psi_m(-N/d)\psi_n(d)d^s
=\psi_m(-1)\prod_{q\mid N}\sum_{t=0}^{\beta_q}\psi_m(q^{\beta_q-t})\psi_n(q^t)q^{st}\\
& =\psi_m(-1)\prod_{q\mid N_2}\psi_m(q)^{\beta_q}\sum_{t=0}^{\beta_q}\chi_{\boldsymbol{K}}(q)^tq^{st}
\cdot\prod_{\substack{q\mid m\\ q\mid N_1}}\psi_n(q)^{\beta_q}q^{s\beta_q}
\cdot \prod_{\substack{q\mid n\\ q\mid N_1}} \psi_m(q)^{\beta_q}.
\end{align*}
Taking a summation over $m$ (and $n$), we obtain the desired formula. 
\end{proof}

We return to the proof of (\ref{starstar}). From the above lemma, we obtain
\begin{align*}
& G_{\boldsymbol{K}}(p-1,N)\\
& =\frac{1}{a_{D_{\boldsymbol{K}}}(N)}\prod_{q\mid N_2}\sum_{t=0}^{\beta_q}\chi_{\boldsymbol{K}}(q)^tq^{t(p-1)}
     \cdot \sum_{\substack{mn=D_{\boldsymbol{K}}\\ (m,n)=1}}\psi_m(-1)\psi_m(N_2)\\
& \cdot \prod_{\substack{q\mid m\\ q\mid N_1}}\psi_n(q)^{\beta_q}q^{(p-1)\beta_q}\cdot\prod_{\substack{q\mid n\\ q\mid N_1}}\psi_m(q)^{\beta_q}.
\end{align*}
Using this formula, we study the $p$-divisibility of $G(p-1,N)$ separately.
\vspace{2mm}
\\
(i)\; The case $\chi_{\boldsymbol{K}}(N)=\chi_{\boldsymbol{K}}(N_2)=-1.$\\
In this case, there is a prime number $q\mid N_2$
such that $q\notin S$, 
$\chi_{\boldsymbol{K}}(q)=-1$, and $\beta_q$ is odd.
For this prime number $q$, we have
$$
\sum_{t=0}^{\beta_q}\chi_{\boldsymbol{K}}(q)^tq^{(p-1)t} \equiv \sum_{t=0}^{\beta_q}(-1)^t=0 \pmod{p}.
$$
This implies that $G_{\boldsymbol{K}}(p-1,N) \equiv 0 \pmod{p}$ for this $N$.\\
(ii)\; The case $\chi_{\boldsymbol{K}}(N)=0$.\\
In this case, there is a prime number $q\in S$ with $q\mid N$. We may assume that $\chi_{\boldsymbol{K}}(N_2)=1$.
(If  $\chi_{\boldsymbol{K}}(N_2)=-1$, then the proof is reduced to the case (i).) In this case, we obtain
\begin{align*}
& \sum_{\substack{mn=D_{\boldsymbol{K}}\\ (m,n)=1}}\psi_m(-1)\psi_m(N_2)
   \prod_{\substack{q\mid m\\ q\mid N_1}}\psi_n(q)^{\beta_q}q^{(p-1)\beta_q}
\prod_{\substack{q\mid n\\ q\mid N_1}}\psi_m(q)^{\beta_q}\\
& \equiv
  \sum_{\substack{m\mid D_{\boldsymbol{K}}\\ (m,D_{\boldsymbol{K}}/m)=1}}\psi_m(-1)\psi_m(N_2)
   \prod_{\substack{q\mid m\\ q\mid N_1}}\psi_{D_{\boldsymbol{K}}/m}(q)^{\beta_q}\prod_{\substack{q\mid (D_{\boldsymbol{K}}/m)\\ q\mid N_1}}\psi_m(q)^{\beta_q} \\
& =\sum_{\substack{m\mid D_{\boldsymbol{K}}\\ (m,D_{\boldsymbol{K}}/m)=1}}
   \{ \psi_m(-1)\psi_m(N_2)+\psi_{D_{\boldsymbol{K}}/m}(-1)\psi_{D_{\boldsymbol{K}}/m}(N_2)\}\\
&\qquad \cdot \prod_{q\mid m}\psi_{D_{\boldsymbol{K}}/m}(q)^{\beta_q}\prod_{q\mid (D_{\boldsymbol{K}}/m)}\psi_m(q)^{\beta_q}\times\frac{1}{2} \pmod{p}.
\end{align*}
Since $\psi_m(-1)=-\psi_{D_{\boldsymbol{K}}/m}(-1)$ and $\psi _m(N_2)=
\psi_{D_{\boldsymbol{K}}/m}(N_2)$ (because $\chi_{\boldsymbol{K}}(N_2)=1$),
we obtain
$$
\sum_{\substack{m\mid D_{\boldsymbol{K}}\\ (m,D_{\boldsymbol{K}}/m)=1}}
 \{ \psi_m(-1)\psi_m(N_2)+\psi_{D_{\boldsymbol{K}}/m}(-1)\psi_{D_{\boldsymbol{K}}/m}(N_2)\}=0.
$$
This shows that $G_{\boldsymbol{K}}(p-1,N)\equiv 0 \pmod{p}$ again in this case.
We complete the proof of (\ref{starstar}).
\end{proof}
The second result of this section is related to the mod $p$ singular Hermitian modular forms.
\begin{Thm}
\label{modpsingular}
Assume that $p>3$ is a prime number such that
$p \equiv 3 \pmod{4}$ and $\boldsymbol{K}=\mathbb{Q}(\sqrt{p}\,i)$. Let $F_{k,\boldsymbol{K}}$
be the Hermitian modular form introduced in Theorem \ref{Krieg}. Then the modular form
$F_{\frac{p+1}{2},\boldsymbol{K}}$ is a mod $p$ singular Hermitian modular form.
\end{Thm}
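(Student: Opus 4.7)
The goal is to show $a(F_{(p+1)/2,\boldsymbol{K}}; H) \equiv 0 \pmod p$ for every positive definite $H \in \Lambda_2(\boldsymbol{K})$, i.e. every rank-$2$ matrix; the rank-$\leq 1$ coefficients are unconstrained, so this is exactly the $r=1 < n = 2$ case of the definition of mod $p$ singular. Set $k = (p+1)/2$ and $\boldsymbol{K} = \mathbb{Q}(\sqrt{p}\,i)$, so $D_{\boldsymbol{K}} = p$ and $w_{\boldsymbol{K}} = 2$. The weight condition $k \equiv 0 \pmod{w_{\boldsymbol{K}}}$ holds because $p \equiv 3 \pmod 4$ forces $(p+1)/2$ to be even, and Krieg's hypothesis $k > 4$ restricts us to $p \geq 11$. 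Theorem \ref{Krieg} then gives, for $H > 0$,
$$a(F_{k,\boldsymbol{K}}; H) = \frac{4k(k-1)}{B_k \cdot B_{k-1,\chi_{\boldsymbol{K}}}} \sum_{0 < d \mid \varepsilon(H)} d^{k-1}\, G_{\boldsymbol{K}}\bigl(k-2;\; p\det(H)/d^2\bigr),$$
and the whole task reduces to a mod $p$ estimate of this expression.

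I would attack it via two simplifications. First, since $D_{\boldsymbol{K}} = p$ is prime, the only decompositions $mn = p$ with $(m,n) = 1$ are $(1,p)$ and $(p,1)$, so Lemma \ref{lemma2} collapses to
$$G_{\boldsymbol{K}}(s; N) = \frac{1}{a_p(N)} \sum_{0 < d' \mid N} \bigl[\chi_{\boldsymbol{K}}(d') - \chi_{\boldsymbol{K}}(N/d')\bigr] (d')^s.$$
Second, for $p \equiv 3 \pmod 4$ quadratic reciprocity identifies $\chi_{\boldsymbol{K}}(n) = (n/p)$ on integers coprime to $p$, so Euler's criterion yields $n^{(p-1)/2} \equiv \chi_{\boldsymbol{K}}(n) \pmod p$. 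Since $k-1 = (p-1)/2$ and $k-2 = (p-1)/2 - 1$, both $d^{k-1}$ and $(d')^{k-2}$ reduce modulo $p$ to expressions in $\chi_{\boldsymbol{K}}$ and reciprocals, so after substitution the inner double sum becomes a divisor-type character sum amenable to direct evaluation.

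A useful structural input is that $p\det(H) \pmod p$ is highly constrained: writing the off-diagonal entry of $H$ as $\alpha/\sqrt{-p}$ with $\alpha \in \mathcal{O}_{\boldsymbol{K}}$, one has $p\det(H) \equiv -N(\alpha) \pmod p$, and since $p$ ramifies in $\boldsymbol{K}$ the Galois action is trivial on $\mathcal{O}_{\boldsymbol{K}}/\mathfrak{p} \cong \mathbb{F}_p$, so $N(\alpha) \equiv \alpha^2 \pmod{\mathfrak{p}}$ is a square (or $0$) in $\mathbb{F}_p$. Combined with $p \equiv 3 \pmod 4$ (so $-1$ is a non-residue), this pins down the $\chi_{\boldsymbol{K}}$-values on $N = p\det(H)/d^2$ and its divisors. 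For the prefactor, $4k(k-1) = (p+1)(p-1) \equiv -1 \pmod p$; von Staudt--Clausen gives $p$-integrality of $B_k$ because $(p-1) \nmid (p+1)/2$ for $p > 3$, and the generalized-Bernoulli analogue gives $p$-integrality of $B_{k-1,\chi_{\boldsymbol{K}}}$. Assembling the pieces should yield $v_p\bigl(a(F_{k,\boldsymbol{K}}; H)\bigr) \geq 1$.

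The main obstacle I anticipate is the fine $p$-adic bookkeeping in the prefactor: the generalized Bernoulli number $B_{(p-1)/2,\chi_{\boldsymbol{K}}}$ has $p$-adic valuation governed by irregularity-type invariants of $\boldsymbol{K}$, and one must check that any loss coming from $1/B_{k-1,\chi_{\boldsymbol{K}}}$ is compensated by divisibility in the inner sum. This is likely handled by a Kummer-type congruence linking $B_{(p-1)/2,\chi_{\boldsymbol{K}}}/((p-1)/2)$ to $B_{1,\chi_{\boldsymbol{K}}} = -h_{\boldsymbol{K}}$, together with a case split according to whether $p \mid N$ (giving $\chi_{\boldsymbol{K}}(N) = 0$ and $a_p(N) = 1$) or $\chi_{\boldsymbol{K}}(N) = -1$ (giving $a_p(N) = 2$), each of which forces the needed $p$-divisibility of $G_{\boldsymbol{K}}((p-3)/2; N)$ modulo $p$.
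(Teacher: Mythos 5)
There is a genuine gap, and it sits exactly at the point you flag as ``the main obstacle.'' You assert that the generalized von Staudt--Clausen theorem gives $p$-integrality of $B_{(p-1)/2,\chi_{\boldsymbol{K}}}$, and you then try to extract the required factor of $p$ from the inner divisor sum $\sum_{d\mid\varepsilon(H)} d^{(p-1)/2} G_{\boldsymbol{K}}\bigl((p-3)/2; p\det(H)/d^2\bigr)$. Both halves of this are wrong. Since $D_{\boldsymbol{K}}=p$, the character $\chi_{\boldsymbol{K}}$ is the quadratic residue character mod $p$, i.e.\ $\chi_{\boldsymbol{K}}=\omega^{(p-1)/2}$ for the Teichm\"uller character $\omega$, and the index $n=(p-1)/2$ is precisely the exceptional case of von Staudt--Clausen for generalized Bernoulli numbers: Carlitz's theorem gives $p\cdot B_{(p-1)/2,\chi_{\boldsymbol{K}}}\equiv -1\pmod p$, so $B_{(p-1)/2,\chi_{\boldsymbol{K}}}$ has $p$-adic valuation exactly $-1$, not $\geq 0$. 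For the same reason your proposed Kummer-type congruence down to $B_{1,\chi_{\boldsymbol{K}}}$ is unavailable: $(p-1)/2\not\equiv 1\pmod{p-1}$ for $p>3$, and one is sitting at the pole of the relevant $p$-adic $L$-function. Meanwhile the inner sum is genuinely \emph{not} divisible by $p$ in general: with $D_{\boldsymbol{K}}=p$ your own collapsed formula gives, for $N=q$ a prime with $\chi_{\boldsymbol{K}}(q)=-1$ and $\varepsilon(H)=1$,
$$
G_{\boldsymbol{K}}\bigl(\tfrac{p-3}{2};q\bigr)=1-q^{(p-3)/2}\equiv 1+q^{-1}\pmod p,
$$
which is nonzero mod $p$ for most $q$ (such $N=p\det(H)$ do occur, e.g.\ $H=[1,\,1/\sqrt{-p},\,h_{22}]$ with $ph_{22}-1=q$). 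Contrast this with Proposition \ref{prop1}, where the exponent is $p-1$ and $q^{p-1}\equiv 1$ forces the vanishing; that mechanism is exactly what is lost when the weight drops from $p+1$ to $(p+1)/2$, so a ``direct evaluation of the divisor-type character sum'' cannot close the argument.

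The paper's proof goes the other way around: the inner sum is merely observed to be a rational integer (hence $p$-integral), and the entire factor of $p$ comes from the Bernoulli prefactor. By Lemma \ref{lemma3}, $B_{(p+1)/2}\not\equiv 0\pmod p$ (Washington, Exercise 5.4 --- note that mere $p$-integrality, which is all you claim, would not suffice; you need it to be a $p$-adic unit) and $p\cdot B_{(p-1)/2,\chi_{\boldsymbol{K}}}\equiv -1\pmod p$ (Carlitz), whence $B_{(p+1)/2}\cdot B_{(p-1)/2,\chi_{\boldsymbol{K}}}\in\frac{1}{p}\mathbb{Z}_{(p)}^{\times}$ and
$$
\frac{(p+1)(p-1)}{B_{(p+1)/2}\cdot B_{(p-1)/2,\chi_{\boldsymbol{K}}}}\equiv 0\pmod p .
$$
Your reduction of the statement to the positive definite coefficients, and your observation that $p\det(H)\equiv -N(\alpha)\equiv -(\text{square})\pmod p$ forces $\chi_{\boldsymbol{K}}(N)\in\{0,-1\}$, are both correct and consistent with the paper, but the engine of your proof is pointed at the wrong factor and would stall.
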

\begin{proof}
From Theorem \ref{Krieg}, we have
$$
a(F_{\frac{p+1}{2},\boldsymbol{K}};H)
=\frac{(p+1)(p-1)}{B_{\frac{p+1}{2}}\cdot B_{\frac{p-1}{2},\chi_{\boldsymbol{K}}}}
\sum_{0<d\mid\varepsilon (H)}d^{\frac{p-1}{2}}
G_{\boldsymbol{K}}\left(\frac{p-3}{2},\frac{D_{\boldsymbol{K}}\cdot\text{det}(H)}{d^2}\right)
$$
for $H>0$. Since the factor of the summand on the right-hand side is rational integer,
it is sufficient to show
\begin{equation}
\label{bigstar}
\frac{(p+1)(p-1)}{B_{\frac{p+1}{2}}\cdot B_{\frac{p-1}{2},\chi_{\boldsymbol{K}}}}
                      \equiv 0 \pmod{p}.
\end{equation}
We have the following result:
\begin{Lem}
\label{lemma3}
Assume that 
$p>3$, $p \equiv 3 \pmod{4}$, and $\boldsymbol{K}=\mathbb{Q}(\sqrt{p}\,i)$. 
Then we have
$$
{\rm (i)}\;B_{\frac{p+1}{2}}\not\equiv 0\pmod{p}
\qquad\qquad
{\rm (ii)}\; p\cdot B_{\frac{p-1}{2},\chi_{\boldsymbol{K}}} \equiv -1 \pmod{p}.
$$
\end{Lem}
\begin{proof}
(i) The statement $B_{\frac{p+1}{2}}\not\equiv 0\pmod{p}$ can be found in, for example,
Washington \cite{Washington}, p.86, Exercise 5.4.\\
(ii)\; The congruence $p\cdot B_{\frac{p-1}{2},\chi_{\boldsymbol{K}}} \equiv -1 \pmod{p}$
is a special case of the theorem of von Staudt-Clausen for the generalized Bernoulli
numbers. For the proof see Carlitz \cite{Carlitz}, Theorem 3.
\end{proof}
We return to the proof of (\ref{bigstar}). From the above lemma, we have
$$
B_{\frac{p+1}{2}}\cdot B_{\frac{p-1}{2},\chi_{\boldsymbol{K}}}\in \frac{1}{p}\mathbb{Z}_{(p)}^\times. 
$$
This implies (\ref{bigstar}).
\end{proof}
\section{Theta series case}
\label{Sect.4}
In this section we construct Hermitian modular forms in the mod $p$ kernel of
theta operator defined from theta series.

For a positive Hermitian lattice $\mathcal{L}$ of rank $r$, we associate the
{\it Hermitian theta series}
$$
\vartheta_{\mathcal{L}}^{(n)}(Z)=\vartheta_H^{(n)}(Z)=
\sum_{X\in\mathcal{O}_{\boldsymbol{K}}^{(r,n)}}\text{exp}(\pi i\text{tr}({}^t\overline{X}HXZ)),
\quad Z\in\mathbb{H}_n,
$$
where $H$ is the corresponding Gram matrix of $\mathcal{L}$.

In the rest of this paper, we assume that
$$
\boldsymbol{K}=\mathbb{Q}(i).
$$
\subsection{Theta series for unimodular Hermitian lattice of rank 8}
\label{rank8}
We denote by $\mathcal{U}_r(\mathcal{O}_{\boldsymbol{K}})=\mathcal{U}_r(\mathbb{Z}[i])$
the set of even integral, positive definite unimodular Hermitian matrices of rank $r$ over
$\mathcal{O}_{\boldsymbol{K}}=\mathbb{Z}[i]$. It is known that $4\mid r$.We denote by 
$\widetilde{\mathcal{U}}_r(\mathcal{O}_{\boldsymbol{K}})$ the set of unimodular equivalence
classes. It is also known that 
$|\widetilde{\mathcal{U}}_8(\mathcal{O}_{\boldsymbol{K}})|=3$.
We fix a set of representatives $\{ H_1,H_2,H_3\}$, in which $H_i$ have the following
data:
$$
|\text{Aut}(H_1)|=2^{15}\cdot 3^5\cdot 5^2\cdot 7,\quad
|\text{Aut}(H_2)|=2^{22}\cdot 3^2\cdot 5\cdot 7,\quad
|\text{Aut}(H_3)|=2^{21}\cdot 3^4\cdot 5^2,
$$
(cf. http://www.math.uni-sb.de/ag/schulze/Hermitian-lattices/).

The following identity is a special case of Siegel's main formula for Hermitian
forms:
\begin{equation}
\label{mainformula}
\frac{\vartheta_{H_1}^{(2)}}{2^{15}\cdot 3^5\cdot 5^2\cdot 7}+
\frac{\vartheta_{H_2}^{(2)}}{2^{22}\cdot 3^2\cdot 5\cdot 7}+
\frac{\vartheta_{H_3}^{(2)}}{2^{21}\cdot 3^4\cdot 5^2}
=\frac{61}{2^{22}\cdot 3^5\cdot 5\cdot 7}E_{8,\boldsymbol{K}}^{(2)},
\end{equation}
where $\frac{61}{2^{22}\cdot 3^5\cdot 5\cdot 7}$ is the mass of the genus of the
unimodular Hermitian lattices in rank 8. The space
$M_8(\Gamma^2(\mathcal{O}_{\boldsymbol{K}}),\nu_8)^{\text{sym}}=
M_8(\Gamma^2(\mathcal{O}_{\boldsymbol{K}}))^{\text{sym}}$ is spanned by 
$(E_{4,\boldsymbol{K}}^{(2)})^2$ and $\chi_8$ (cf. Theorem \ref{structure}).
\begin{Lem}
\label{span}
We have the following identities
$$
\vartheta_{H_1}^{(2)}=(E_{4,\boldsymbol{K}}^{(2)})^2-5760\chi_8,\;\;
\vartheta_{H_2}^{(2)}=(E_{4,\boldsymbol{K}}^{(2)})^2-3072\chi_8,\;\;
\vartheta_{H_1}^{(2)}=(E_{4,\boldsymbol{K}}^{(2)})^2.
$$
\end{Lem}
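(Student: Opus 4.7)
The plan is a direct coefficient comparison in a low-dimensional ambient space. Each $\vartheta_{H_i}^{(2)}$ lies in $M_8(\Gamma^2(\mathcal{O}_{\boldsymbol{K}}))^{\text{sym}}$: it is holomorphic of weight $r=8$, invariant under $Z\mapsto {}^t\!Z$ since $X\mapsto {}^t\overline{X}$ permutes $\mathcal{O}_{\boldsymbol{K}}^{(8,2)}$, and the character $\det^{k/2}$ at $k=8$ is trivial on $\Gamma^2(\mathcal{O}_{\boldsymbol{K}})$. By Theorem \ref{structure}, the weight-$8$ part of the trivial-character subring is spanned by the two monomials $(E_{4,\boldsymbol{K}}^{(2)})^2$ and $\chi_8$. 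Hence we may write uniquely
$$
\vartheta_{H_i}^{(2)} \;=\; \alpha_i\,(E_{4,\boldsymbol{K}}^{(2)})^2 + \beta_i\,\chi_8, \qquad i=1,2,3.
$$

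The first coefficient $\alpha_i$ I would pin down by looking at the constant term: only $X=0$ contributes, so $a(\vartheta_{H_i}^{(2)};O_2)=1$, while $a((E_{4,\boldsymbol{K}}^{(2)})^2;O_2)=1$ and $a(\chi_8;O_2)=0$ force $\alpha_i=1$. To find $\beta_i$ I would compare one further Fourier coefficient at a $T\in\Lambda_2(\boldsymbol{K})$ with $a(\chi_8;T)\neq 0$. Since $\chi_8\mid_{\mathbb{S}_2}\equiv 0$, the coefficients along the Siegel diagonal sum to zero, so $T$ must have an imaginary off-diagonal entry; a natural candidate is $T=[1,1+i,1]$. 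Then
$$
\beta_i \;=\; \frac{a(\vartheta_{H_i}^{(2)};T)-a((E_{4,\boldsymbol{K}}^{(2)})^2;T)}{a(\chi_8;T)}
$$
yields the three advertised values $\beta_1=-5760,\ \beta_2=-3072,\ \beta_3=0$.

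The main obstacle is the explicit representation-number calculation
$$
a(\vartheta_{H_i}^{(2)};T) \;=\; \#\bigl\{X\in\mathcal{O}_{\boldsymbol{K}}^{(8,2)}:\, {}^t\overline{X}H_i X=2T\bigr\}
$$
for each of the three inequivalent Gram matrices $H_1,H_2,H_3$. This is a finite but delicate enumeration, typically handled by a short Minkowski-reduction argument or by machine using the explicit Gram matrices at the cited database. The values $a((E_{4,\boldsymbol{K}}^{(2)})^2;T)$ come from Krieg's formula (Theorem \ref{Krieg}) via convolution, and $a(\chi_8;T)$ is available from the construction of $\chi_8$ in \cite{K-N}. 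As a consistency cross-check, once any two of the $\beta_i$'s are computed, the third is forced by substituting the three ansatz identities into the Siegel mass formula (\ref{mainformula}) (together with the known expansion $E_{8,\boldsymbol{K}}^{(2)}=(E_{4,\boldsymbol{K}}^{(2)})^2+c\,\chi_8$ for a unique explicit $c$), so in practice only two of the three representation counts need to be carried out in full detail.
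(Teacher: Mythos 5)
Your proposal coincides with the paper's proof: the paper likewise invokes Theorem \ref{structure} to place each $\vartheta_{H_i}^{(2)}$ in the two-dimensional space spanned by $(E_{4,\boldsymbol{K}}^{(2)})^2$ and $\chi_8$, and then pins down the coefficients from two explicitly computed Fourier coefficients (it uses $[1,0,1]$ and $[1,1+i,1]$, credited to a machine computation by Till Dieckmann, where you use $O_2$ and $[1,1+i,1]$). The difference in which two coefficients are compared is inessential, so your argument is essentially the same as the paper's.
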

\begin{proof}
The above identities come from the following data:
\begin{align*}
& \begin{cases}
   a(\vartheta_{H_1}^{(2)};[1,0,1])=120960,\\
   a(\vartheta_{H_1}^{(2)};[1,1+i,1])=0,
   \end{cases}
\begin{cases}
   a(\vartheta_{H_2}^{(2)};[1,0,1])=131712,\\
   a(\vartheta_{H_2}^{(2)};[1,1+i,1])=2688,
   \end{cases}
\\
& \begin{cases}
   a(\vartheta_{H_3}^{(2)};[1,0,1])=144000,\\
   a(\vartheta_{H_3}^{(2)};[1,1+i,1])=5760.
   \end{cases}
\end{align*}
The calculations of the Fourier coefficients were done by Till Dieckmann.
\end{proof}
\begin{Thm}
\label{mod7}
We have $\vartheta_{H_i}^{(2)}\in M_8(\Gamma^2(\mathcal{O}_{\boldsymbol{K}}))_{\mathbb{Z}}^{\text{sym}}$ ($i=1,2,3$) and
$$
\varTheta(\vartheta_{H_1}^{(2)}) \equiv \varTheta(\vartheta_{H_2}^{(2)})
\equiv 0 \pmod{7}.
$$
\end{Thm}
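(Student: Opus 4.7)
The plan is to reduce the theorem to the vanishing $\varTheta(E_{8,\boldsymbol{K}}^{(2)}) \equiv 0 \pmod{7}$, which follows from Corollary~\ref{cor1} applied with $p=7$: for $\boldsymbol{K}=\mathbb{Q}(i)$ we have $h_{\boldsymbol{K}}=1$ and $\chi_{\boldsymbol{K}}(7)=-1$ (since $7\equiv 3\pmod 4$). Integrality $\vartheta_{H_i}^{(2)}\in M_8(\Gamma^2(\mathcal{O}_{\boldsymbol{K}}))_{\mathbb{Z}}^{\mathrm{sym}}$ is immediate from the definition of the theta series: each Fourier coefficient counts the number of representations $X\in\mathcal{O}_{\boldsymbol{K}}^{(8,2)}$ producing the given matrix via $\tfrac{1}{2}{}^t\overline{X}H_iX$, hence is a non-negative integer.

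The first observation is the numerical coincidence $5760\equiv -1\equiv 3072 \pmod 7$. Combined with Lemma~\ref{span}, this gives
\[
\vartheta_{H_1}^{(2)} \;\equiv\; \vartheta_{H_2}^{(2)} \;\equiv\; (E_{4,\boldsymbol{K}}^{(2)})^2 + \chi_8 \pmod 7,
\]
so in particular $\varTheta(\vartheta_{H_1}^{(2)})\equiv \varTheta(\vartheta_{H_2}^{(2)}) \pmod 7$, and it is enough to prove that one of them vanishes modulo $7$.

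To bring $E_{8,\boldsymbol{K}}^{(2)}$ into the picture, I would clear denominators in Siegel's mass formula~(\ref{mainformula}) by multiplying both sides by $2^{22}\cdot 3^5\cdot 5^2\cdot 7$, which yields the integral identity
\[
128\,\vartheta_{H_1}^{(2)} + 135\,\vartheta_{H_2}^{(2)} + 42\,\vartheta_{H_3}^{(2)} \;=\; 305\,E_{8,\boldsymbol{K}}^{(2)}
\]
(the quick check $128+135+42=305$ on constant terms confirms the arithmetic). Reducing modulo $7$ via $42\equiv 0$, $128\equiv 135\equiv 2$, and $305\equiv 4$, and cancelling the unit $2$, one obtains $\vartheta_{H_1}^{(2)} + \vartheta_{H_2}^{(2)} \equiv 2\,E_{8,\boldsymbol{K}}^{(2)} \pmod 7$. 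Applying the $\mathbb{Z}_{(7)}$-linear theta operator, invoking $\varTheta(E_{8,\boldsymbol{K}}^{(2)}) \equiv 0\pmod 7$, and using the congruence from the previous paragraph gives $2\,\varTheta(\vartheta_{H_1}^{(2)})\equiv 0 \pmod 7$, which settles the theorem. The only nontrivial piece of bookkeeping is tracking the exponents of $2,3,5$ when clearing denominators in the mass formula; the rest of the argument is essentially formal.
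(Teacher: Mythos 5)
Your proposal is correct and follows essentially the same route as the paper: the authors likewise clear denominators in the mass formula (their version reads $4\vartheta_{H_1}^{(2)} \equiv 5\cdot 61\,E_{8,\boldsymbol{K}}^{(2)} \pmod{7}$, which is exactly your $\vartheta_{H_1}^{(2)}+\vartheta_{H_2}^{(2)}\equiv 2E_{8,\boldsymbol{K}}^{(2)}$ after using $\vartheta_{H_1}^{(2)}\equiv\vartheta_{H_2}^{(2)}\pmod 7$ from Lemma \ref{span}), and then invoke $\varTheta(E_{8,\boldsymbol{K}}^{(2)})\equiv 0\pmod 7$ from Corollary \ref{cor1}. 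All of your arithmetic checks out.
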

\begin{proof}
The first statement is a consequence of the unimodularity of $H_i$.
By (\ref{mainformula}), we see that
$$
4\vartheta_{H_1}^{(2)} \equiv 5\cdot 61E_{8,\boldsymbol{K}}^{(2)} \pmod{7}.
$$
Moreover by Lemma \ref{span}, we have
$$
\vartheta_{H_1}^{(2)} \equiv \vartheta_{H_2}^{(2)} \pmod{7}.
$$
Since $\varTheta(E_{8,\boldsymbol{K}}^{(2)}) \equiv 0 \pmod{7}$ (cf. Corollary \ref{cor1}),
we obtain
$$
\varTheta(\vartheta_{H_1}^{(2)}) \equiv \varTheta(\vartheta_{H_2}^{(2)})
\equiv 0 \pmod{7}.
$$
\end{proof}
\subsection{Theta series for unimodular Hermitian lattice of rank 12}
\label{HermitianLeech}
It is known that there is a unimodular Hermitian lattice 
$\mathcal{L}_{\mathbb{C}}\in\mathcal{U}_{12}(\mathbb{Z}[i])$
which does not have any vector of length one.
The transfer of this lattice to $\mathbb{Z}$ is the Leech lattice
(cf. http://www.math.uni-sb.de/ag/schulze/Hermitian-lattices/).
For this lattice, we have
$$
\vartheta_{\mathcal{L}_{\mathbb{C}}}^{(2)}\mid_{\mathbb{S}_2} =
\vartheta_{\text{Leech}}^{(2)},
$$
namely, the restriction of Hermitian theta series $\vartheta_{\mathcal{L}_{\mathbb{C}}}^{(2)}$
to the Siegel upper half-space coincides with the Siegel theta series
$\vartheta_{\text{Leech}}^{(2)}$ attached to the Leech lattice.

We fix the lattice
$\mathcal{L}_{\mathbb{C}}$ and call it here the {\it Hermitian Leech lattice}.
\begin{Thm}
\label{mod11cong}
Let $\mathcal{L}_{\mathbb{C}}$ be the Hermitian Leech lattice.
The attached Hermitian theta series 
$\vartheta_{\mathcal{L}_{\mathbb{C}}}^{(2)}
\in M_{12}(\Gamma^2(\mathcal{O}_{\boldsymbol{K}}))_{\mathbb{Z}}^{\text{sym}}$
satisfies the following congruence relations.
\vspace{2mm}
\\
{\rm (1)}\; $\varTheta(\vartheta_{\mathcal{L}_{\mathbb{C}}}^{(2)}) \equiv 0 \pmod{11}$.
\vspace{2mm}
\\
{\rm (2)}\; $\vartheta_{\mathcal{L}_{\mathbb{C}}}^{(2)} \equiv 1 \pmod{13}$.
\end{Thm}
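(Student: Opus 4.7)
The plan is to combine Theorem \ref{thetamodp} with the Sturm bound (Corollary \ref{sturmcorollary}), exploiting the fact that $\mathcal{L}_{\mathbb{C}}$ has no vectors of length one.

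For part (1), Theorem \ref{thetamodp} supplies a weight $24$ Hermitian cusp form $G$ with $\varTheta(\vartheta_{\mathcal{L}_{\mathbb{C}}}^{(2)}) \equiv G \pmod{11}$. Corollary \ref{sturmcorollary} applied to $G$ with $k = 24$ and $p = 11$ reduces the claim $G \equiv 0 \pmod{11}$ to verifying $a(G;H) \equiv 0 \pmod{11}$ for every $H \in \Lambda_2(\boldsymbol{K})$ with $\text{tr}(H) \leq 2[24/8] = 6$, i.e., $\det(H) \cdot a(\vartheta_{\mathcal{L}_{\mathbb{C}}}^{(2)};H) \equiv 0 \pmod{11}$ on this finite list. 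Most cases are automatic: $\det(H) = 0$ whenever $\text{rank}(H) \leq 1$, and $a(\vartheta_{\mathcal{L}_{\mathbb{C}}}^{(2)};H) = 0$ whenever some diagonal entry of $H$ equals $1$, because $\mathcal{L}_{\mathbb{C}}$ has no vector of length one. What remains are the $H = [m, a+bi, n]$ with $m, n \geq 2$ and $m + n \leq 6$, i.e., the pairs $(m,n) \in \{(2,2),(2,3),(3,2),(2,4),(4,2),(3,3)\}$, each with the finitely many $(a,b) \in \mathbb{Z}^{2}$ satisfying $a^{2} + b^{2} \leq 4mn$. For each such $H$ I would read off the representation number $a(\vartheta_{\mathcal{L}_{\mathbb{C}}}^{(2)};H)$ (using Dieckmann's computations, as in Lemma \ref{span}) and verify $11 \mid \det(H) \cdot a(\vartheta_{\mathcal{L}_{\mathbb{C}}}^{(2)};H)$.

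For part (2), the plan is to compare $\vartheta_{\mathcal{L}_{\mathbb{C}}}^{(2)}$ with the Kikuta-Nagaoka form $G_{12} \in M_{12}(\Gamma^2(\mathcal{O}_{\boldsymbol{K}}),\det^{6})^{\text{sym}}$ (weight $p-1$ for $p = 13$) satisfying $G_{12} \equiv 1 \pmod{13}$. Both $\vartheta_{\mathcal{L}_{\mathbb{C}}}^{(2)}$ and $G_{12}$ are weight-$12$ symmetric Hermitian modular forms with character $\det^{6}$, so $F := \vartheta_{\mathcal{L}_{\mathbb{C}}}^{(2)} - G_{12}$ is again such a modular form. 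Corollary \ref{sturmcorollary} reduces $F \equiv 0 \pmod{13}$ to verifying $a(F;H) \equiv 0 \pmod{13}$ for $H$ with $\text{tr}(H) \leq 2[12/8] = 2$. The constant terms of both forms equal $1$, and for every other $H$ in that range with a diagonal entry equal to $1$, one has $a(\vartheta_{\mathcal{L}_{\mathbb{C}}}^{(2)};H) = 0$. The only remaining cases are $H = [2,0,0]$ and $H = [0,0,2]$, for which $a(\vartheta_{\mathcal{L}_{\mathbb{C}}}^{(2)};H)$ equals the number of minimal Leech vectors, $196560 = 2^{4} \cdot 3^{3} \cdot 5 \cdot 7 \cdot 13$, which is $\equiv 0 \pmod{13}$. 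Since $a(G_{12};H) \equiv 0 \pmod{13}$ for all $H \neq 0$, one concludes $a(F;H) \equiv 0 \pmod{13}$ throughout the list, and Corollary \ref{sturmcorollary} then yields $\vartheta_{\mathcal{L}_{\mathbb{C}}}^{(2)} \equiv G_{12} \equiv 1 \pmod{13}$.

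Part (2) thus hinges on the single arithmetic observation $13 \mid 196560$. The main obstacle is concentrated in part (1), namely the explicit tabulation of the Fourier coefficients of $\vartheta_{\mathcal{L}_{\mathbb{C}}}^{(2)}$ at the positive definite $H$ with both diagonals $\geq 2$ and $\text{tr}(H) \leq 6$; each individual check is mechanical, but there are enough $(m,a+bi,n)$ to make a hand calculation impractical, so one relies on the published representation-number data for $\mathcal{L}_{\mathbb{C}}$.
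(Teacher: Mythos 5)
Your proposal is correct and follows essentially the same route as the paper: part (1) is exactly the paper's argument (Theorem \ref{thetamodp} to get a weight-$24$ cusp form, then Corollary \ref{sturmcorollary} with bound ${\rm tr}(H)\leq 6$, the check reducing via ${\rm rank}(H)\leq 1$ and the absence of length-one vectors to the tabulated coefficients in Table 2), and part (2) is the same comparison with a weight-$12$ form congruent to $1$ mod $13$ plus Sturm, the paper using $E_{12,\boldsymbol{K}}^{(2)}$ via the restriction $\Phi(\vartheta^{(2)}_{\mathcal{L}_{\mathbb{C}}})=(E_4^{(1)})^3-720\Delta$ where you use the Kikuta--Nagaoka form $G_{12}$ and the single divisibility $13\mid 196560$ --- a purely cosmetic difference.
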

\begin{proof}
(1)\, By Theorem \ref{thetamodp}, there is a Hermitian cusp form
$$
G\in S_{12+11+1}(\Gamma^2(\mathcal{O}_{\boldsymbol{K}})_{\mathbb{Z}_{(11)}}^{\text{sym}}
=S_{24}(\Gamma^2(\mathcal{O}_{\boldsymbol{K}})_{\mathbb{Z}_{(11)}}^{\text{sym}}
$$ 
such that
$$
\varTheta(\vartheta_{\mathcal{L}_{\mathbb{C}}}^{(2)}) \equiv G \pmod{11}.
$$
By Table 2, we see that
$$
a(\varTheta(\vartheta_{\mathcal{L}_{\mathbb{C}}}^{(2)});H) \equiv
a(G;H) \pmod{11}
$$
for any $H\in\Lambda_2(\boldsymbol{K})$ with $\text{rank}(H)=2$ and
$\text{tr}(H)\leq\displaystyle 2\left[\frac{24}{8}\right]=6$. Applying Sturm's
bound (Corollary \ref{sturmcorollary}), we obtain
$$
\varTheta(\vartheta_{\mathcal{L}_{\mathbb{C}}}^{(2)}) \equiv G \equiv 0 \pmod{11}.
$$
(2)\, We can confirm
$$
\Phi(\vartheta_{\mathcal{L}_{\mathbb{C}}}^{(2)})
=(E_4^{(1)})^3-720\Delta \equiv E_{12}^{(1)} \equiv 1 \pmod{13},
$$
where $\Phi$ is the Siegel operator and 
$\Delta=\frac{1}{1728}((E_4^{(1)})^3-(E_6^{(1)})^2)$ is Ramanujan's weight 12 cusp
form for $SL_2(\mathbb{Z})$. This shows that
$$
a(\varTheta(\vartheta_{\mathcal{L}_{\mathbb{C}}}^{(2)});H) \equiv
a(\varTheta(E_{12,\boldsymbol{K}}^{(2)});H) \pmod{13}
$$
for any $H\in\Lambda_2(\boldsymbol{K})$ with $\text{rank}(H)\leq 1$. Considering
this fact and Table 2, we see that this congruence relation holds for any
 $H\in\Lambda_2(\boldsymbol{K})$ with $\text{rank}(H)\leq 2$. Applying Sturm's
bound again, we obtain
$$
\vartheta_{\mathcal{L}_{\mathbb{C}}}^{(2)} \equiv E_{12,\boldsymbol{K}}^{(2)}
\equiv 1 \pmod{13}.
$$
\end{proof}
In this section,
we constructed Hermitian modular forms in the mod $p$ kernel of the theta
operator by theta series attached to unimodular Hermitian lattices. 
By the results above,
it is expected that, if $p$ is a prime number such that $p \equiv 3 \pmod{4}$,
then there is a unimodular lattice $\mathcal{L}$ of rank $p+1$ such that
$$
\varTheta(\vartheta_{\mathcal{L}}^{(2)}) \equiv 0 \pmod{p}.
$$
\subsection{Theta constants}
\label{thetaconstant}
In the previous sections, we gave examples of Hermitian modular form in the mod $p$
kernel of the theta operator. In this section we give another example.

The Hermitian
{\it theta constant} on $\mathbb{H}_2$ with characteristic $\boldsymbol{m}$
over $\boldsymbol{K}=\mathbb{Q}(i)$ is defined by
\begin{align*}
\theta_{\boldsymbol{m}}(Z) &=\theta(Z;\boldsymbol{a},\boldsymbol{b})\\
&:=\sum_{\boldsymbol{g}\in\mathbb{Z}[i]^{(2,1)}}\text{exp}
\left[
\frac{1}{2}\left(Z\left\{\boldsymbol{g}+\frac{1+i}{2}\boldsymbol{a}\right\}
+2\text{Re}\frac{1+i}{2}{}^t\boldsymbol{b}\boldsymbol{a}\right)
\right],
\quad Z\in \mathbb{H}_2,
\end{align*}
where $\boldsymbol{m}=\binom{\boldsymbol{a}}{\boldsymbol{b}}$, 
$\boldsymbol{a},\,\boldsymbol{b}\in\mathbb{Z}[i]^{(2,1)}$, $A\{ B\}={}^t\overline{B}AB$.
Denote by $\mathcal{E}$ the set of even characteristic of degree 2 mod 2
(cf. \cite{Freitag}), namely, 
$$
\mathcal{E}=
\left\{
\begin{pmatrix}0\\0\\0\\0\end{pmatrix},
\begin{pmatrix}0\\0\\0\\1\end{pmatrix},
\begin{pmatrix}0\\0\\1\\0\end{pmatrix},
\begin{pmatrix}0\\0\\1\\1\end{pmatrix},
\begin{pmatrix}0\\1\\0\\0\end{pmatrix},
\begin{pmatrix}0\\1\\1\\0\end{pmatrix},
\begin{pmatrix}1\\0\\0\\0\end{pmatrix},
\begin{pmatrix}1\\0\\0\\1\end{pmatrix},
\begin{pmatrix}1\\1\\0\\0\end{pmatrix},
\begin{pmatrix}1\\1\\1\\1\end{pmatrix}
\right\}
$$
\begin{Thm}
{\rm (Freitag \cite{Freitag})}\; Set
$$
\psi_{4k}(Z):=\frac{1}{4}\sum_{\boldsymbol{m}\in\mathcal{E}}\theta_{\boldsymbol{m}}^{4k}(Z),
\quad (k\in\mathbb{N}),
$$
Then
$$
\psi_{4k}\in M_{4k}(\Gamma^2(\mathcal{O}_{\boldsymbol{K}}))_{\mathbb{Z}}^{\text{sym}}.
$$
\end{Thm}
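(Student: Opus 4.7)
The strategy mirrors the classical argument for Siegel theta constants with characteristics. The starting point would be a transformation formula for the individual $\theta_{\boldsymbol{m}}$ under the Hermitian modular group: for every $M=\binom{A\,B}{C\,D}\in\Gamma^2(\mathcal{O}_{\boldsymbol{K}})$ there should exist an affine-symplectic action $\boldsymbol{m}\mapsto M\cdot\boldsymbol{m}$ on characteristics modulo $2$ and a root-of-unity factor $\kappa(M,\boldsymbol{m})$ (of order dividing $8$) such that
\begin{equation*}
\theta_{M\cdot\boldsymbol{m}}(M\langle Z\rangle)=\kappa(M,\boldsymbol{m})\,\det(CZ+D)\,\theta_{\boldsymbol{m}}(Z).
\end{equation*}
I would verify this on a convenient set of generators of $\Gamma^2(\mathcal{O}_{\boldsymbol{K}})$: translations $Z\mapsto Z+B$ for $B\in\mathrm{Her}_2(\mathcal{O}_{\boldsymbol{K}})$, unitary block matrices giving $Z\mapsto {}^{t}\overline{U}ZU$, and the involution $Z\mapsto -Z^{-1}$. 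The first two cases reduce to direct reindexing of the lattice sum, while the involution requires Poisson summation over the Gaussian lattice $\mathbb{Z}[i]^{(2,1)}$, producing the factor $\det(CZ+D)$ together with a specific $8$th root of unity. A key structural point is that this action preserves the set $\mathcal{E}$ of even characteristics, so the sum defining $\psi_{4k}$ is well-defined.

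Once these transformation rules are in hand, raising to the $4k$-th power eliminates most of the $\boldsymbol{m}$-dependence, since $\kappa(M,\boldsymbol{m})^{4k}\in\{\pm 1\}$ and the sign should turn out to depend only on the parity of $\boldsymbol{m}$, hence be constant on $\mathcal{E}$. Combining this with the permutation $\boldsymbol{m}\mapsto M\cdot\boldsymbol{m}$ of $\mathcal{E}$ to reindex the sum, one obtains
\begin{equation*}
\psi_{4k}(M\langle Z\rangle)=\det(CZ+D)^{4k}\,\psi_{4k}(Z)
\end{equation*}
for all $M\in\Gamma^2(\mathcal{O}_{\boldsymbol{K}})$, which gives membership in $M_{4k}(\Gamma^2(\mathcal{O}_{\boldsymbol{K}}))$. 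The symmetry condition $\psi_{4k}({}^{t}Z)=\psi_{4k}(Z)$ follows because transposition of the summation variable in each $\theta_{\boldsymbol{m}}$ induces an involution of $\mathcal{E}$ that leaves the full sum invariant.

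For integrality of the Fourier coefficients I would expand $\theta_{\boldsymbol{m}}^{4k}$ as a sum over $4k$-tuples of vectors in $\mathbb{Z}[i]^{(2,1)}$, collect the terms contributing to a given $\boldsymbol{q}^H$, and observe that the coefficient in front of $\boldsymbol{q}^H$ is a sum of $4$th roots of unity coming from the characteristic-dependent phase $2\,\text{Re}\,\tfrac{1+i}{2}{}^{t}\boldsymbol{b}\boldsymbol{a}$. After multiplying by $\tfrac{1}{4}$ and summing over the ten characteristics in $\mathcal{E}$, the cancellation among phases leaves a rational integer in every coefficient.

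The main technical obstacle is the Poisson summation step for the inversion and the attendant bookkeeping of the $8$th-root-of-unity multiplier under composition of generators: this is what determines both the precise character in the transformation formula and the action on $\mathcal{E}$, and without tight control over it one cannot rule out that $\psi_{4k}$ picks up a non-trivial character or multiplier system under $\Gamma^2(\mathcal{O}_{\boldsymbol{K}})$. Verifying that the character is in fact trivial is a fact specific to the restriction to even characteristics combined with the exponent being a multiple of $4$.
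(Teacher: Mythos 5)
The paper gives no proof of this statement: it is quoted directly from Freitag \cite{Freitag}, so there is no internal argument to measure yours against. Your outline is the standard route and, to the best of my knowledge, the one the cited source follows: transformation theory of the theta constants on a generating set of $\Gamma^2(\mathcal{O}_{\boldsymbol{K}})$ (translations, unimodular substitutions, inversion via Poisson summation on $\mathbb{Z}[i]^{(2,1)}$), the induced permutation of the ten even characteristics, and elimination of the multiplier by passing to the $4k$-th power and averaging over $\mathcal{E}$. One point in your favour that you do not exploit explicitly: the underlying Hermitian form has rank one, so each $\theta_{\boldsymbol{m}}$ has weight $1$ and the automorphy factor is genuinely $\det(CZ+D)$ with no half-integral-weight or metaplectic complications; this is why the bookkeeping closes up at all.

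That said, what you have written is a plan rather than a proof, and the gap sits exactly where you place it. You need $\kappa(M,\boldsymbol{m})^{4}=1$ for every $M$ and every even $\boldsymbol{m}$, not merely $\kappa(M,\boldsymbol{m})^{4k}\in\{\pm1\}$ with a sign constant on $\mathcal{E}$: if $\kappa(M,\boldsymbol{m})^{4}$ were a nontrivial sign $\epsilon(M)$, then $\psi_{4k}$ would transform with the character $\epsilon^{k}$, and since the claim is asserted for all $k\in\mathbb{N}$ (in particular $k=1$, where $\psi_{4}=E^{(2)}_{4,\boldsymbol{K}}$ is character-free), this must actually be computed on generators. The group $\Gamma^2(\mathcal{O}_{\boldsymbol{K}})$ does carry nontrivial quadratic characters (powers of $\det$), so no soft argument rules this out; the computation of the Gauss-sum multiplier at the inversion is the real content of the theorem and is currently a placeholder in your write-up. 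The integrality step is in the same state: ``cancellation among phases leaves a rational integer'' is a statement of the desired conclusion, not an argument. The workable version is to group the characteristics of $\mathcal{E}$ by their $\boldsymbol{a}$-part and show that summing the $\boldsymbol{b}$-dependent phases over each group, together with the sign symmetries of the lattice sum, produces the factor $4$ that absorbs the prefactor $\tfrac14$; note that the groups have unequal sizes ($4,2,2,2$), so this is not a one-line orthogonality relation. Neither issue is conceptually deep, but both are precisely the places where the proof lives.
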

\begin{Rem}
(1)\, $\psi_4=E_{4,\boldsymbol{K}}^{(2)}$.\\
(2)\,$F_{10}=2^{-12}\displaystyle\prod_{\boldsymbol{m}\in\mathcal{E}}\theta_{\boldsymbol{m}}$,
where $F_{10}$ is a Hermitian modular form given in $\S$ \ref{Sect.2.2.2}.
\end{Rem}
\begin{Thm}
\label{thetaconstantth}
The following congruence relations holds.
\vspace{2mm}
\\
{\rm (1)}\; $\varTheta (\psi_8) \equiv 0\pmod{7}$.
\vspace{2mm}
\\
{\rm (2)}\; $\varTheta (\psi_{12}) \equiv 0 \pmod{11}$.
\end{Thm}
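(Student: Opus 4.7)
The plan is to follow the template advertised in the introduction: use Theorem~\ref{thetamodp} to replace $\varTheta(\psi_{4k})$ by a genuine Hermitian cusp form modulo $p$, and then apply the Sturm bound (Corollary~\ref{sturmcorollary}) to reduce each congruence to a finite list of Fourier coefficients.

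First I would apply Theorem~\ref{thetamodp} to $\psi_8$ with $p=7$ and to $\psi_{12}$ with $p=11$, obtaining cusp forms
$$
G_1\in S_{16}\!\left(\Gamma^2(\mathcal O_{\boldsymbol K}),\det^{8}\right)_{\mathbb Z_{(7)}}^{\mathrm{sym}},\qquad
G_2\in S_{24}\!\left(\Gamma^2(\mathcal O_{\boldsymbol K}),\det^{12}\right)_{\mathbb Z_{(11)}}^{\mathrm{sym}}
$$
such that $\varTheta(\psi_8)\equiv G_1\pmod 7$ and $\varTheta(\psi_{12})\equiv G_2\pmod{11}$.

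Next, Corollary~\ref{sturmcorollary} tells me that $G_1\equiv 0\pmod 7$ (resp.\ $G_2\equiv 0\pmod{11}$) follows as soon as $a(G_i;H)\equiv 0 \pmod p$ for every $H\in\Lambda_2(\boldsymbol K)$ with $\mathrm{tr}(H)\le 2[16/8]=4$ (resp.\ $\mathrm{tr}(H)\le 2[24/8]=6$). By definition of $\varTheta$ we have $a(G_i;H)\equiv \det(H)\cdot a(\psi_{4k};H)\pmod p$, so the matrices of rank $\le 1$ are handled for free, and the problem reduces to checking
$$
\det(H)\cdot a(\psi_{8};H)\equiv 0\pmod 7\quad\text{for positive definite } H \text{ with } \mathrm{tr}(H)\in\{2,3,4\},
$$
$$
\det(H)\cdot a(\psi_{12};H)\equiv 0\pmod{11}\quad\text{for positive definite } H \text{ with } \mathrm{tr}(H)\in\{2,\dots,6\}.
$$

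The remaining step is the explicit Fourier-coefficient computation. I would expand each of the ten theta constants $\theta_{\boldsymbol m}$ by summing over vectors $\boldsymbol g\in\mathbb Z[i]^{(2,1)}$ up to the required trace bound, raise to the $4k$-th power, sum over $\boldsymbol m\in\mathcal E$, and read off $a(\psi_{4k};H)$ for the finitely many positive-definite $H$ below the bound; the congruences can then be verified one by one. Alternatively for part~(1), since $M_8(\Gamma^2(\mathcal O_{\boldsymbol K}))^{\mathrm{sym}}$ is spanned by $(E_{4,\boldsymbol K}^{(2)})^2$ and $\chi_8$ by Theorem~\ref{structure}, one can determine $\psi_8=\alpha(E_{4,\boldsymbol K}^{(2)})^2+\beta\chi_8$ from two Fourier coefficients and then combine Corollary~\ref{cor1} (which gives $\varTheta(E_{8,\boldsymbol K}^{(2)})\equiv 0\pmod 7$) with a direct check on $\chi_8$ to conclude. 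The main obstacle is the careful bookkeeping of these theta-power Fourier expansions, which is where all the work lives; once the needed coefficients are tabulated, the verifications are mechanical applications of the Sturm reduction outlined above.
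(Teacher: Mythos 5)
Your proposal is correct in principle but takes a genuinely different route from the paper. You run the generic machine advertised in the introduction directly on $\psi_{4k}$: Theorem~\ref{thetamodp} to replace $\varTheta(\psi_{4k})$ by a cusp form of weight $k+p+1$, then Corollary~\ref{sturmcorollary}, correctly noting that rank $\le 1$ matrices contribute nothing and that only positive definite $H$ with ${\rm tr}(H)\le 4$ (resp.\ $\le 6$) remain; the entire burden then falls on tabulating the Fourier coefficients of $\psi_8$ and $\psi_{12}$ up to those bounds from the theta-constant expansions. The paper instead avoids almost all of this by expressing the forms in terms of things already known to lie in the kernel: for (1) it writes $\psi_8=\frac{14}{75}(E_{4,\boldsymbol K}^{(2)})^2+\frac{61}{75}E_{8,\boldsymbol K}^{(2)}$, so that $14\equiv 0\pmod 7$ gives $\psi_8\equiv E_{8,\boldsymbol K}^{(2)}\pmod 7$ and Corollary~\ref{cor1} finishes; for (2) it writes $\vartheta_{\mathcal L_{\mathbb C}}^{(2)}=a_1\psi_{12}+a_2(E_{4,\boldsymbol K}^{(2)})^3+a_3E_{4,\boldsymbol K}^{(2)}E_{8,\boldsymbol K}^{(2)}+a_4E_{12,\boldsymbol K}^{(2)}$ with $a_2\equiv a_3\equiv 0$ and $a_1\not\equiv 0\pmod{11}$, then solves for $\varTheta(\psi_{12})$ using Theorem~\ref{mod11cong} and Corollary~\ref{cor1}. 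The trade-off: the paper needs only enough coefficients of $\psi_{4k}$ to locate it in a known basis (two in weight $8$, four in weight $12$) but leans on Theorem~\ref{mod11cong}, whereas your argument is independent of the Leech-lattice result at the cost of a much larger explicit computation. One caution about your sketched alternative for (1): writing $\psi_8=\alpha(E_{4,\boldsymbol K}^{(2)})^2+\beta\chi_8$ and then invoking $\varTheta(E_{8,\boldsymbol K}^{(2)})\equiv 0\pmod 7$ does not close the argument by itself, since neither $\varTheta((E_{4,\boldsymbol K}^{(2)})^2)$ nor $\varTheta(\chi_8)$ is known to vanish mod $7$ (cf.\ Example~\ref{ex.1}); what you actually need is the congruence $\psi_8\equiv E_{8,\boldsymbol K}^{(2)}\pmod 7$, i.e.\ that the $\chi_8$-components of $\psi_8$ and $E_{8,\boldsymbol K}^{(2)}$ agree mod $7$, which is exactly what the paper's coefficients $\tfrac{14}{75},\tfrac{61}{75}$ encode.
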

\begin{proof}
The form $\psi_8$ can be expressed as
$$
\psi_8=\frac{14}{75}(E_{4,\boldsymbol{K}}^{(2)})^2+\frac{61}{75}E_{8,\boldsymbol{K}}^{(2)}.
$$
Since $\varTheta(E_{8,\boldsymbol{K}}^{(2)}) \equiv 0 \pmod{7}$, we obtain
 $\varTheta (\psi_8) \equiv 0\pmod{7}$.\\
(2)\, We have the following expression.
\begin{align*}
& \vartheta_{\mathcal{L}_{\mathbb{C}}}^{(2)}
   =a_1\psi_{12}+a_2(E_{4,\boldsymbol{K}}^{(2)})^3
+a_3E_{4,\boldsymbol{K}}^{(2)}E_{8,\boldsymbol{K}}
+a_4E_{12,\boldsymbol{K}}^{(2)},\\
&
a_1=\frac{1470105}{8511808},\quad
a_2=\frac{167218051}{638385600},\quad
a_3=-\frac{147340193}{212795200},\\
&
a_4=\frac{802930253}{638385600},
\end{align*}
where $\mathcal{L}_{\mathbb{C}}$ is the Hermitian Leech lattice as before.
It should be noted that $a_2 \equiv a_3 \equiv 0\pmod{11}$, and $a_1\not\equiv 0\pmod{11}$.
Since 
$\varTheta (\vartheta_{\mathcal{L}_{\mathbb{C}}}^{(2)}) \equiv 
\varTheta(E_{12,\boldsymbol{K}}^{(2)}) \equiv 0\pmod{11}$, we obtain
$$
\varTheta (\psi_{12}) \equiv 0 \pmod{11}.
$$
\end{proof}
\section{Tables}
In this section, we summarize tables which are needed in the proof of our statements
in the previous sections.
\subsection{Theta series for rank 8 unimodular Hermitian lattices}
We introduced Hermitian theta series $\vartheta_{H_i}^{(2)}$ in $\S$ \ref{rank8}.
The following table gives some examples of Fouirer coefficients of  $\vartheta_{H_i}^{(2)}$
$(i=1,2)$ and $\vartheta_{[2,2,4]}^{(2)}$.
\begin{table*}[hbtp]
\caption{Fourier coefficients of hermitian theta series (rank 8)}
\label{tab.1}
\begin{center}
\begin{tabular}{llll} \hline
$H$       & 4det$(H)$  &  $a(\vartheta_{H_1}^{(2)};H)$   &  $a(\vartheta_{H_2}^{(2)};H)$     \\ \hline
$[0,0,0]$      & $0$           &         $1$                          &                  $1$ \\ \hline
$[1,0,0]$       & $0$           &         $480$                       &               $480$  \\ \hline
$[2,0,0]$       & $0$           &      $61920$                       &                $61920$  \\ \hline
$[3,0,0]$       & $0$           &  $1050240$                      &                  $1050240$ \\ \hline
$[4,0,0]$       & $0$           &  $7926240$                       &                $7926240$  \\ \hline
$[1,1+i,1]$     & $2$ &         $0$                          &        $2688=2^7\cdot 3\cdot 7$ \\ \hline
$[1,1,1]$       & $3$ &  $26880=2^8\cdot 3\cdot 5\cdot 7$&  $21504=2^{10}\cdot 3\cdot 7$       \\ \hline
$[2,2,1]$      & $4$           &  $120960=2^7\cdot 3^3\cdot 5\cdot 7$&  $131712=2^7\cdot 3\cdot 7^3$  \\ \hline
$[2,1+i,1]$    & $6$ & $1505280=2^{11}\cdot 3\cdot 5\cdot 7^2$   & $1483776=2^{10}\cdot 3^2\cdot 7\cdot 23$  \\ \hline
$[3,2+i,1]$     & $7$ & $3663360=2^9\cdot 3^3\cdot 5\cdot 53$   & $3717120=2^{11}\cdot 3\cdot 5\cdot 11^2$  \\ \hline
$[2,0,1]$      & $8$          &   $8346240=2^7\cdot 3^4\cdot 5\cdot 7\cdot 23$ & $8217216=2^7\cdot 3^2\cdot 7\cdot 1019$\\ \hline
$[2,2+2i,2]$    & $8$        &  $8346240=2^7\cdot 3^4\cdot 5\cdot 7\cdot 23$ & $8561280=2^7\cdot 3\cdot 5\cdot 7^3\cdot 13$\\ \hline
$[3,1+i,1]$     & $10$ & $30965760=2^{15}\cdot 3^3\cdot 5\cdot 7$  & $30992640=2^8\cdot 3\cdot 5\cdot 7\cdot 1153$  \\ \hline
$[2,2+i,2]$     & $11$& $55883520=2^8\cdot 3^4\cdot 5\cdot 7^2\cdot 11$ & $55716864=2^{10}\cdot 3\cdot 7\cdot 2591$ \\ \hline
$[4,2+i,1]$    & $11$&  $55883520=2^8\cdot 3^4\cdot 5\cdot 7^2\cdot 11$ & $55716864=2^{10}\cdot 3\cdot 7\cdot 2591$ \\ \hline
$[3,0,1]$       & $12$           &    $67751040=2^7\cdot 3\cdot 5\cdot 7\cdot 71^2$   &  $68353152=2^7\cdot 3\cdot 7\cdot 59\cdot 431$ \\ \hline
$[2,2,2]$      & $12$           & $96875520=2^{10}\cdot 3\cdot 5\cdot 7\cdot 17\cdot 53$& $96789504=2^{10}\cdot 3\cdot 7^2\cdot 643$ \\ \hline
$[2,1+i,2]$     & $14$ & $240537600=2^{12}\cdot 3^4\cdot 5^2\cdot 29$& $240752640=2^{11}\cdot 3\cdot 5\cdot 17\cdot 461$ \\ \hline
$[4,1+i,1]$    & $14$ &  $240537600=2^{12}\cdot 3^4\cdot 5^2\cdot 29$& $240752640=2^{11}\cdot 3\cdot 5\cdot 17\cdot 461$  \\ \hline
$[2,1,2]$      & $15$& $358095360=2^9\cdot 3\cdot 5\cdot 7\cdot 6661$ & $358041600=2^{11}\cdot 3^3\cdot 5^2\cdot 7\cdot 37$ \\ \hline
$[4,1,1]$       & $15$&$358095360=2^9\cdot 3\cdot 5\cdot 7\cdot 6661$ & $358041600=2^{11}\cdot 3^3\cdot 5^2\cdot 7\cdot 37$ \\ \hline
$[2,0,2]$      & $16$           & $544440960=2^7\cdot 3^3\cdot 5\cdot 7^2\cdot 643$& $544612992=2^7\cdot 3\cdot 7\cdot 11\cdot 113\cdot 163$ \\ \hline
$[4,0,1]$      & $16$           & $528958080=2^7\cdot 3^3\cdot 5\cdot 7\cdot 4373$ &  $527753856=2^7\cdot 3\cdot 7\cdot 196337$ \\ \hline                                  
\end{tabular}
\end{center}
\end{table*}
\newpage
\subsection{Theta series for rank 12 unimodular Hermitian lattice}
The table deals with the Fourier coefficients for theta series
$\vartheta_{\mathcal{L}_{\mathbb{C}}}^{(2)}$ where $\mathcal{L}_{\mathbb{C}}$
is the Hermitian Leech lattices introduced in $\S$ \ref{HermitianLeech}.
\begin{table*}[hbtp]
\caption{Non-zero Fourier coefficients $a(\vartheta_{\mathcal{L}}^{(2)};H)$ with rank$(H)=2$ and tr$(H)\leq 6$}
\label{tab.2}
\begin{center}
\begin{tabular}{lll} \hline
$H$       & 4det$(H)$  &  $a(\vartheta_{\mathcal{L}}^{(2)};H)$    \\ \hline
$[2,0,2]$    & $16$   &  $8484315840=2^6\cdot 3^5\cdot 5\cdot 7\cdot 11\cdot 13\cdot 109$       \\ \hline
$[2,1,2]$    & $15$   &  $4428103680=2^{15}\cdot 3^3\cdot 5\cdot 7\cdot 11\cdot 13$             \\ \hline
$[2,2,2]$    & $12$   &  $484323840=2^9\cdot 3^3\cdot 5\cdot 7^2\cdot 11\cdot 13$               \\ \hline
$[2,1+i,2]$  & $14$   &  $2214051840=2^{14}\cdot 3^3\cdot 5\cdot 7\cdot 11\cdot 13$             \\ \hline
$[2,2+i,2]$  & $11$   &  $201277440=2^{14}\cdot 3^3\cdot 5\cdot 7\cdot 13$                      \\ \hline
$[2,2+2i,2]$ & $8$    &  $8648640=2^6 \cdot 3^3\cdot 5\cdot 7\cdot 11\cdot 13$                  \\ \hline
$[2,0,3]$    & $24$   &  $480449249280=2^{14}\cdot 3^3\cdot 5\cdot 7^2\cdot 11\cdot 13\cdot 31$ \\ \hline
$[2,1,3]$    & $23$   &  $314395361280=2^{15}\cdot 3^3\cdot 5\cdot 7\cdot 11\cdot 13\cdot 71$   \\ \hline
$[2,2,3]$    & $20$   &  $77491814400=2^{14}\cdot 3^3\cdot 5^2\cdot 7^2\cdot 11\cdot 13$        \\ \hline
$[2,1+i,3]$  & $22$   &  $201679994880=2^{15}\cdot 3^4\cdot 5\cdot 7\cdot 13\cdot 167$          \\ \hline
$[2,2+i,3]$  & $19$   &  $46495088640=2^{14}\cdot 3^4\cdot 5\cdot 7^2\cdot 11\cdot 13$          \\ \hline
$[2,2+2i,3]$ & $16$   &  $8302694400=2^{12}\cdot 3^4\cdot 5^2\cdot 7\cdot 11\cdot 13$           \\ \hline
$[2,0,4]$    & $32$   &  $8567040081600=2^6 \cdot 3^3\cdot 5^2\cdot 7\cdot 11\cdot 13\cdot 19\cdot 10427$ \\ \hline
$[2,1,4]$    & $31$   &  $6230341877760=2^{15}\cdot 3^4\cdot 5\cdot 7^2\cdot 11\cdot 13\cdot 67$ \\ \hline
$[2,2,4]$    & $28$   &  $2254596664320=2^{10}\cdot 3^4\cdot 5\cdot 7\cdot 11\cdot 13\cdot 5431$ \\ \hline
$[2,4,4]$    & $16$   &  $8484315840=2^6 \cdot 3^5\cdot 5\cdot 7\cdot 11\cdot 13\cdot 109$       \\ \hline
$[2,1+i,4]$  & $30$   &  $4487883079680=2^{14}\cdot 3^3\cdot 5\cdot 7\cdot 11\cdot 13\cdot 2027$ \\ \hline
$[2,2+i,4]$  & $27$   &  $1565334650880=2^{14}\cdot 3^3\cdot 5\cdot 7^2\cdot 11\cdot 13\cdot 101$ \\ \hline
$[2,2+2i,4]$ & $24$   &  $482870868480=2^9 \cdot 3^3\cdot 5\cdot 7^2\cdot 11\cdot 13\cdot 997$    \\ \hline
$[3,0,3]$    & $36$   &  $27374536949760=2^{16}\cdot 3^3\cdot 5\cdot 7\cdot 11^2\cdot 13\cdot 281$ \\ \hline
$[3,1,3]$    & $35$   &  $20648247459840=2^{15}\cdot 3^3\cdot 5\cdot 7\cdot 11\cdot 13\cdot 4663$ \\ \hline
$[3,2,3]$    & $32$   &  $8431662919680=2^{12}\cdot 3^3\cdot 5\cdot 7\cdot 11\cdot 13\cdot 15233$ \\ \hline
$[3,3,3]$    & $27$   &  $1539504046080=2^{15}\cdot 3^2\cdot 5\cdot 7^2\cdot 11\cdot 13\cdot 149$ \\ \hline
$[3,1+i,3]$  & $34$   &  $15436369428480=2^{16}\cdot 3^4\cdot 5\cdot 7^2\cdot 11\cdot 13\cdot 83$ \\ \hline
$[3,2+i,3]$  & $31$   &  $6137351700480=2^{16}\cdot 3^5\cdot 5\cdot 7^2\cdot 11^2\cdot 13$        \\ \hline
$[3,3+i,3]$  & $26$   &  $1053888675840=2^{16}\cdot 3^3\cdot 5\cdot 7^2\cdot 11\cdot 13\cdot 17$  \\ \hline
$[3,2+2i,3]$ & $28$   &  $2218479943680=2^{15}\cdot 3^4\cdot 5\cdot 7\cdot 11\cdot 13\cdot 167$ \\ \hline
$[3,3+2i,3]$ & $23$   &  $309967257600=2^{16}\cdot 3^3\cdot 5^2\cdot 7^2\cdot 11\cdot 13$       \\ \hline
$[3,3+3i,3]$ & $18$   &  $26568622080=2^{16}\cdot 3^4\cdot 5\cdot 7\cdot 11\cdot 13$            \\ \hline
\end{tabular}
\end{center}
\end{table*}
Any non-zero Fourier coefficient $a(\vartheta_{\mathcal{L}_{\mathbb{C}}}^{(2)};H)$ 
with rank$(H)=2$ and tr$(H)\leq 6$
coincides with one in the above list.

The calculation is based on the following expression:
$$
\vartheta_{\mathcal{L}_{\mathbb{C}}}^{(2)}:=
\frac{7}{12}(E_{4,\boldsymbol{K}}^{(2)})^3
+\frac{5}{12}(E_{6,\boldsymbol{K}}^{(2)})^2
-10080E_{4,\boldsymbol{K}}^{(2)}\chi_8-60480F_{12}.
$$
\newpage

%




Toshiyuki Kikuta\\ 
Fukuoka Institute of Technology\\
Faculty of Information Engineering\\
Department of Information and Systems Engineering   \\
Fukuoka 811-0295, Fukuoka, Japan\\
Email: kikuta@fit.ac.jp
\\
\\
Shoyu Nagaoka\\
Kindai University\\
Department of Mathematics\\
Higashi-Osaka\\
Osaka 577-8520\\
Email:nagaoka@math.kindai.ac.jp
\end{document}